\DeclareMathOperator{\Rep}{Re}
\DeclareMathOperator{\Imp}{Im}
\DeclareMathOperator{\sign}{sign}
\DeclareMathOperator*{\Res}{Res}
\theoremstyle{plain}
\newtheorem*{Thm}{Theorem}
\newtheorem{Lem}{Lemma}[section]
\newtheorem{Prop}{Proposition}[section]
\newtheorem{Cor}{Corollary}[section]
\theoremstyle{definition}
\theoremstyle{remark}
\newtheorem{Rem}{Remark}[section]
\newcommand{\Cb}{\mathbb{C}}
\newcommand{\Qb}{\mathbb{Q}}
\newcommand{\Rb}{\mathbb{R}}
\newcommand{\Zb}{\mathbb{Z}}
\title[]{
A functional relation
for Tornheim's double zeta functions}
\author{Kazuhiro Onodera}
\address{Department of Mathematics, Tokyo Institute of Technology,
O-okayama, Meguro-ku, Tokyo
152-8551, Japan}
\email{onodera@math.titech.ac.jp}
\date{}
\thanks{This work was supported by
Grant-in-Aid for JSPS Fellows
(No.\ 22008809)}
\subjclass[2010]{11M32}
\keywords{double zeta function,
Witten zeta function,
double zeta value, functional relation,
functional equation,
partial fraction decomposition}
\begin{document}
\baselineskip 15pt

\maketitle

\begin{abstract}
In this paper,
we generalize the partial fraction decomposition
which is fundamental in the theory of multiple zeta values,
and prove a relation between
Tornheim's double zeta functions of three complex variables.
As applications,
we give
new integral representations of several zeta functions,
an extension of the parity result to the whole domain of convergence,
concrete expressions of Tornheim's double zeta function
at non-positive integers
and some results for the behavior of
a certain Witten's zeta function
at each integer.
As an appendix,
we show a functional equation
for Euler's double zeta function.
\end{abstract}
\section{Introduction}
Tornheim's double zeta function is defined as
\[
\zeta(s,t;u)=
\sum_{m,n=1}^\infty \frac1{m^s n^t (m+n)^u}
\]
for $(s,t,u)\in\Cb^3$
with $\Rep(s+u)>1$, $\Rep(t+u)>1$ and $\Rep(s+t+u)>2$.
It is known, by Matsumoto \cite[Theorem 1]{Matsumoto02},
that $\zeta(s,t;u)$ can be meromorphically continued to
the whole space $\Cb^3$,
and its singularities are located on
the subsets of $\Cb^3$ defined by one of
the equations
$s+u=1-l$, $t+u=1-l$ ($l=0,1,2,\dotsc$)
and $s+t+u=2$.
This function can be regarded as a generalization
of some well-known zeta functions:
the product of two Riemann zeta functions
$\zeta(s)\zeta(t)=\zeta(s,t;0)$,
the Euler double zeta function $\zeta(u,t)=\zeta(0,t;u)$
and the $SU(3)$-type Witten zeta function
$\zeta_{SU(3)}(s)=2^s \zeta(s,s;s)$.
Euler and Tornheim \cite{Tornheim50}, and many people
gave a lot of relations between the values $\zeta(s,t;u)$
for triples $(s,t,u)$ of non-negative integers
on the domain of convergence,
but
little relation as functions of complex variables
has been found.
As an exception,
Tsumura \cite[Theorem 4.5]{Tsumura07}
represented explicitly the function
\begin{equation}
\label{eq:def of Z}
Z(s,t;u)=\zeta(s,t;u)+\cos(\pi t)\zeta(t,u;s)
+\cos(\pi s)\zeta(u,s;t)
\end{equation}
in terms of the Riemann zeta function,
when $s,t\in\Zb_{\ge 0}$, $t\ge 2$ and $u\in\Cb$,
except for singularities.
Afterward Nakamura \cite[Theorem 1.2]{Nakamura06}
gave a simpler version:
for $s,t\in\Zb_{\ge 1}$ and $u\in\Cb$
except for the singular points,
\begin{align}
Z(s,t;u)
&=
2\sum_{h=0}^{[s/2]}
\binom{s+t-2h-1}{s-2h}
\zeta(2h)\zeta(s+t+u-2h)
\nonumber\\
&\quad
+2\sum_{k=0}^{[t/2]}
\binom{s+t-2k-1}{t-2k}
\zeta(2k)\zeta(s+t+u-2k),
\label{eq:Nakamura's result}
\end{align}
where $[x]$ for $x\in\Rb$
denotes the greatest integer not exceeding $x$.
This result seems really fascinating
because it contains most of the known relations
between the values $\zeta(s,t;u)$ for $s,t,u\in\Zb_{\ge 1}$
and the Riemann zeta values
(see \cite[\S 3]{Nakamura06}).
The aim of this paper is to
generalize it to
a relation between
Tornheim's double zeta functions of three complex variables.
Our main result is

\begin{Thm}
The following relation holds
on the whole space $\Cb^3$
except for the singular points
of both sides:
\[
Z(s,t;u)=A(s,t;u)+A(t,s;u),
\]
where,
for $(s,t,u)\in\Cb^3$ with $s,-t, 1-t-u\ne 0,1,2,\dotsc$,
\begin{equation}
\label{thm-eq:definition of A}
A(s,t;u)
=
\frac{\sin(\pi s)}{2\pi i}
\int_L
\cot\left(\frac{\pi(s-\eta)}2\right)
\frac{\Gamma(t+\eta)\Gamma(-\eta)}{\Gamma(t)}
\zeta(s-\eta)\zeta(t+u+\eta)d\eta.
\end{equation}
Here,
the contour $L$ is
a line from $-i\infty$ to $i\infty$
indented in such a manner as to
separate the poles at
$\eta=s-2n,-t-n,1-t-u~(n=0,1,2,\dotsc)$
from the poles at $\eta=0,1,2,\dotsc$.
\end{Thm}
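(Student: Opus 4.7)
The plan is to prove the identity in a region of $(s,t,u)\in\Cb^3$ where $\Rep(s)$, $\Rep(t)$, $\Rep(u)$ are sufficiently large so that all quantities admit absolutely convergent representations, and then to extend to the stated domain by analytic continuation. The extension is automatic since both sides are meromorphic on $\Cb^3$: the left-hand side by Matsumoto's theorem, and the right-hand side $A(s,t;u)+A(t,s;u)$ because the contour $L$ depends meromorphically on $(s,t,u)$ away from the indicated pole loci.

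The central construction, as announced in the abstract, is a contour-integral generalization of the partial fraction decomposition
$$\frac{1}{X^a Y^b}=\sum_{k=0}^{a-1}\binom{b+k-1}{k}\frac{1}{X^{a-k}(X+Y)^{b+k}}+\sum_{k=0}^{b-1}\binom{a+k-1}{k}\frac{1}{Y^{b-k}(X+Y)^{a+k}}\qquad(a,b\in\Zb_{\ge 1}),$$
from integer to complex parameters. The complex analogue must replace the two binomial sums by Mellin--Barnes integrals, whose kernel, to interpolate between the integer residue structure, is built from $\cot(\pi(s-\eta)/2)$ together with Gamma factors---precisely the structure appearing in $A(s,t;u)$. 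The natural derivation is to start from the standard Mellin--Barnes representation $(1+w)^{-c}=(2\pi i\,\Gamma(c))^{-1}\int\Gamma(c+\xi)\Gamma(-\xi)w^\xi\,d\xi$ with $w=Y/X$, and to combine a pair of shifted representations using the partial fraction expansion of the cotangent. The consistency check is that closing the contour $L$ to the left and collecting residues at $\eta=s-2n$ (from $\cot(\pi(s-\eta)/2)$) and $\eta=-t-n$ (from $\Gamma(t+\eta)$) recovers the two binomial sums of the classical identity when $s,t$ are positive integers.

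With the generalized partial fraction identity in hand, one substitutes $(X,Y)=(m,n)$, multiplies by $1/(m+n)^u$, and sums over $m,n\ge 1$; absolute convergence in the chosen region justifies interchange with the contour integral. After elementary manipulations, the double sum collapses to $\zeta(s-\eta)\zeta(t+u+\eta)$, producing the integrand of $A(s,t;u)$, and its $s\leftrightarrow t$ counterpart yields $A(t,s;u)$. The cyclic partners $\cos(\pi t)\zeta(t,u;s)$ and $\cos(\pi s)\zeta(u,s;t)$ of $Z(s,t;u)$ are generated by residues picked up during the contour rearrangement and identified, via the functional equation of the Riemann zeta, with the claimed cyclic Tornheim zetas; the cosine factors are precisely what the functional equation contributes.

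The principal obstacle is the derivation of the generalized partial fraction identity itself, in particular the precise form of the cotangent kernel and the verification that its integer specialization reproduces the two binomial sums. A subsidiary difficulty is the bookkeeping that matches the residues collected during the contour manipulations with the cyclic terms $\cos(\pi t)\zeta(t,u;s)+\cos(\pi s)\zeta(u,s;t)$ after invoking the functional equation. Routine estimates---Stirling's formula for $\Gamma$ and convexity bounds for $\zeta$ along vertical strips---suffice to justify Fubini's theorem and the contour deformations.
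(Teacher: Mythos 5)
Your overall architecture (Mellin--Barnes generalization of the partial fraction decomposition, substitution $(p,q)=(m,n)$, Fubini, analytic continuation) matches the paper, but two of your central mechanisms are misidentified, and one of them would fail outright. First, the cotangent kernel does \emph{not} belong to the generalized partial fraction decomposition. The paper's Lemmas \ref{lem:GPFD1} and \ref{lem:GPFD2} use the kernel $\Gamma(1-s+\eta)\Gamma(-\eta)/\Gamma(1-s)$ with no trigonometric factor; closing that contour at integer $s$ recovers \emph{all} shifts $h=0,\dots,s-1$ of the classical identity \eqref{eq:classical PFD}. By contrast, the poles of $\cot(\pi(s-\eta)/2)$ sit only at $\eta=s-2n$, so a kernel built on the cotangent can only ever produce the \emph{even} shifts --- which is exactly the structure of Nakamura's formula \eqref{eq:Nakamura's result}, not of \eqref{eq:classical PFD}. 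Your proposed ``consistency check'' therefore cannot succeed. In the actual proof the cotangent emerges only at the very end: after decomposing all three double sums, the stuffle relation $\zeta(s,0;t)+\zeta(t,0;s)=\zeta(s)\zeta(t)-\zeta(s+t)$ produces a factor $1+\cos(\pi(s-\eta))$, which combines with $\Gamma(1-s+\eta)\Gamma(s-\eta)=\pi/\sin(\pi(s-\eta))$ via $(1+\cos\theta)/\sin\theta=\cot(\theta/2)$.

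Second, and more seriously, the cyclic partners $\cos(\pi t)\zeta(t,u;s)$ and $\cos(\pi s)\zeta(u,s;t)$ cannot be ``generated by residues picked up during the contour rearrangement'' and then matched via the functional equation of $\zeta$. The residues of the integrand $\Gamma(t+\eta)\Gamma(-\eta)\zeta(s-\eta)\zeta(t+u+\eta)$ are products of two Riemann zeta values (or a single value at the pole of $\zeta(s-\eta)$); no contour shift can conjure a genuine double series $\sum 1/(m^t n^u(m+n)^s)$ out of them, and the functional equation of the Riemann zeta function is never used in the proof of the Theorem. Instead one must treat $\cos(\pi t)\zeta(t,u;s)+\cos(\pi s)\zeta(u,s;t)$ as double sums in their own right and decompose each term by a second partial-fraction lemma, obtained from the first by continuing $p\mapsto -p$ (so that $(-p)^{-s}=e^{\mp\pi is}p^{-s}$ supplies the cosine factors and the roles of $m$, $n$, $m+n$ are permuted). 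Only after adding the two decompositions, invoking the stuffle relation, and cancelling the leftover $\zeta(s+t+u)$ terms by Barnes' lemma does the symmetrized identity $Z=A(s,t;u)+A(t,s;u)$ drop out. Your proposal is missing this second lemma and the accompanying bookkeeping entirely.
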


\begin{Rem}
The singularities of $A(s,t;u)$
are located only on the subsets of $\Cb^3$
defined by the equations
$t+u=1-l~(l=0,1,2,\dotsc)$.
This can be easily seen by shifting the contour $L$
as follows.
Let $K$ be a non-negative integer.
If $\Rep(s)<K+1/2$, $-K-1/2<\Rep(t)$ and $-K+1/2<\Rep(t+u)$,
then
\begin{align}
A(s,t;u)
&=
2\sum_{k=0}^K
\frac{(t)_k}{k!}
\cos^2\left(\frac{\pi(s-k)}2\right)
\zeta(s-k)\zeta(t+u+k)
\nonumber\\
&\quad
+\frac{\sin(\pi s)}{2\pi i}
\int_{L_K}\cot\left(\frac{\pi (s-\eta)}2\right)
\frac{\Gamma(t+\eta)\Gamma(-\eta)}{\Gamma(t)}
\zeta(s-\eta)\zeta(t+u+\eta) d\eta,
\label{eq:A-function shifted}
\end{align}
where
$(t)_k=\Gamma(t+k)/\Gamma(t)$ and
$L_K$ describes the vertical line
from $K+1/2-i\infty$ to $K+1/2+i\infty$.
Also, from this, it is clear that our functional relation is
a generalization of (\ref{eq:Nakamura's result})
(see (\ref{lem-eq:A(a,t;u)})).
\end{Rem}

\begin{Rem}
Since
\[
\begin{pmatrix}
Z(s,t;u)\\
Z(t,u;s)\\
Z(u,s;t)
\end{pmatrix}
=\begin{pmatrix}
1 & \cos(\pi t) & \cos(\pi s)\\
\cos(\pi t) & 1 & \cos(\pi u)\\
\cos(\pi s) & \cos(\pi u) & 1
\end{pmatrix}
\begin{pmatrix}
\zeta(s,t;u)\\
\zeta(t,u;s)\\
\zeta(u,s;t)
\end{pmatrix},
\]
we can write
$\zeta(s,t;u)$
in terms of the $Z$-function as
\begin{align}
\Delta(s,t,u)\zeta(s,t;u)
&=(1-\cos^2(\pi u))Z(s,t;u)
\nonumber\\
&\quad
+(\cos(\pi s)\cos(\pi u)-\cos(\pi t))Z(t,u;s)
\nonumber\\
&\quad
+(\cos(\pi t)\cos(\pi u)-\cos(\pi s))Z(u,s;t),
\label{eq:representation of zeta by Z-function}
\end{align}
where
$\Delta(s,t,u)
=1-\cos^2(\pi s)-\cos^2(\pi t)-\cos^2(\pi u)
+2\cos(\pi s)\cos(\pi t)\cos(\pi u)$,
and so Theorem
gives a new integral representation of $\zeta(s,t;u)$.
Some special cases will be displayed
in Proposition \ref{prop:representation of zetas}.
\end{Rem}

In this paper,
to prove Theorem,
we employ Li's method in \cite{Lie_arxiv}
which gave a simple proof of (\ref{eq:Nakamura's result}).
In \S 2, we will generalize some partial fraction decompositions
used there
to a usable form in our case.
We will give a proof of Theorem in \S 3
and exhibit its applications in \S 4.
In Appendix,
we will show a functional equation
for Euler's double zeta function.
\section{Generalized partial fraction decomposition}
The following partial fraction decomposition
plays a fundamental role in the theory of multiple zeta values:
for two independent variables $p,q$
and two positive integers $s,t$,
\begin{equation}
\label{eq:classical PFD}
\frac1{p^sq^t}
=\sum_{h=0}^{s-1}
\frac{(t)_h}{h!}\frac1{p^{s-h}(p+q)^{t+h}}
+\sum_{k=0}^{t-1}
\frac{(s)_k}{k!}\frac1{q^{t-k}(p+q)^{s+k}}.
\end{equation}
In this section,
we will formulate
two partial fraction decompositions
in the case of $s,t$ being complex numbers.

\begin{Lem}
\label{lem:GPFD1}
Let $p,q$ be positive real numbers
and let $s,t$ be complex numbers
whose real parts are positive.
If $s,t\ne 1,2,\dotsc$, then
\begin{equation}
\label{lem-eq1:GPFD1}
\frac{\Gamma(s)\Gamma(t)}{p^s q^t}
=I(s,t;p,r)+I(t,s;q,r),
\end{equation}
where $r=p+q$ and
\begin{equation}
\label{lem-eq2:GPFD1}
I(s,t;p,r)
=\frac1{2\pi i}
\int_{L_{s,t}}
\frac{\Gamma(1-s+\eta)\Gamma(-\eta)}{\Gamma(1-s)}
\frac{\Gamma(s-\eta)}{p^{s-\eta}}
\frac{\Gamma(t+\eta)}{r^{t+\eta}}d\eta.
\end{equation}
Here, the contour $L_{s,t}$ is a line from $-i\infty$ to $i\infty$
indented in such a manner as to separate
the points at $\eta=s-1-m,-t-m~(m=0,1,2,\dotsc)$
from the points at $\eta=s+n,n~(n=0,1,2,\dotsc)$.
\end{Lem}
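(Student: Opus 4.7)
The plan is to evaluate $I(s,t;p,r)$ by shifting the contour $L_{s,t}$ to the right, thereby crossing the simple poles of $\Gamma(-\eta)$ at $\eta=n$ and of $\Gamma(s-\eta)$ at $\eta=s+n$ for $n=0,1,2,\dotsc$. The first task is to justify the shift by an asymptotic analysis: using Stirling's formula one shows that on the vertical line $\Rep\eta=N+\tfrac12$ the integrand is dominated by $(p/r)^{N}$ times a factor that decays exponentially in $|\Imp\eta|$. Since the positivity of $p,q$ forces $0<p/r<1$, the line integral tends to $0$ as $N\to\infty$, and consequently $I(s,t;p,r)$ equals minus the sum of all residues in the right half-plane.

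The two families of residues are then standard reflection-formula computations. Using $\Gamma(s-n)\Gamma(1-s+n)=(-1)^{n}\pi/\sin(\pi s)$ together with $\pi/(\sin(\pi s)\Gamma(1-s))=\Gamma(s)$, the residue at $\eta=n$ simplifies to $-\Gamma(s)\Gamma(t+n)/(n!\,p^{s-n}r^{t+n})$; summing in $n$ via the binomial series $\sum_{n\ge 0}(t)_{n}z^{n}/n!=(1-z)^{-t}$ with $z=p/r$, $1-z=q/r$, collapses the total to exactly $\Gamma(s)\Gamma(t)/(p^{s}q^{t})$. An analogous computation at $\eta=s+n$ produces the residue $\Gamma(s+t+n)p^{n}/((s)_{n+1}r^{s+t+n})$; denote the sum of these by $S_{1}$. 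This yields
\[
I(s,t;p,r)=\frac{\Gamma(s)\Gamma(t)}{p^{s}q^{t}}-S_{1}.
\]

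A parallel argument for $I(t,s;q,r)$ gives the same identity with $S_{1}$ replaced by the companion sum $S_{2}$ obtained by swapping $s,p$ with $t,q$. Adding the two reduces the lemma to proving $S_{1}+S_{2}=\Gamma(s)\Gamma(t)/(p^{s}q^{t})$. For this I would recognise each $S_{i}$ as a ${}_{2}F_{1}$ value, apply Euler's integral representation, and perform the substitution $u=pw/(q+pw)$ (or its mirror image) to obtain
\[
S_{1}=\frac{\Gamma(s+t)}{p^{s}q^{t}}\int_{0}^{p/r}u^{s-1}(1-u)^{t-1}du,\qquad S_{2}=\frac{\Gamma(s+t)}{p^{s}q^{t}}\int_{p/r}^{1}u^{s-1}(1-u)^{t-1}du.
\]
Their ranges juxtapose to $[0,1]$, so $S_{1}+S_{2}=\Gamma(s+t)B(s,t)/(p^{s}q^{t})=\Gamma(s)\Gamma(t)/(p^{s}q^{t})$, which is (\ref{lem-eq1:GPFD1}).

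The delicate step I anticipate is the Stirling estimate validating the contour shift: one must combine the reflection identity $\Gamma(1-s+\eta)\Gamma(s-\eta)=\pi/\sin\pi(s-\eta)$ with the asymptotics of $\Gamma(-\eta)\Gamma(t+\eta)$ and with the factor $r^{-(t+\eta)}p^{\eta-s}$ to secure a bound that is summable in $\Imp\eta$ and tends to $0$ in $N$. The hypotheses play their parts: $\Rep(s),\Rep(t)>0$ and $s,t\notin\{1,2,\dotsc\}$ let $L_{s,t}$ be drawn without meeting any pole and keep the right-hand poles simple, while the positivity of $p,q$ is what forces $p/r,q/r\in(0,1)$ and so makes both the contour shift and the residue series converge.
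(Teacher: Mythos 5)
Your argument is correct, and it reaches the identity by a genuinely different route from the paper. You evaluate each $I$ separately by pushing $L_{s,t}$ to the right: the residues at $\eta=n$ reassemble, via $\Gamma(s-n)\Gamma(1-s+n)=(-1)^n\pi/\sin(\pi s)$ and the binomial series in $p/r\in(0,1)$, into the full main term $\Gamma(s)\Gamma(t)/(p^sq^t)$, while the residues at $\eta=s+n$ give a correction $S_1$ that you identify (correctly --- the ${}_2F_1(s+t,1;s+1;p/r)$ does reduce, after an Euler transformation, to $\Gamma(s+t)p^{-s}q^{-t}\int_0^{p/r}u^{s-1}(1-u)^{t-1}\,du$) with an incomplete beta integral; the two corrections then tile $[0,1]$ and sum to the complete beta function. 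The paper instead never computes $I$ as a residue series: it writes $\Gamma(s-\eta)p^{-(s-\eta)}$ and $\Gamma(t+\eta)r^{-(t+\eta)}$ as Laplace integrals, interchanges the order of integration, and collapses the Barnes integral with the classical evaluation $\frac1{2\pi i}\int_{L}\frac{\Gamma(1-s+\eta)\Gamma(-\eta)}{\Gamma(1-s)}x^\eta\,d\eta=(1+x)^{s-1}$, which identifies $I(s,t;p,r)$ with $\iint_{0<\nu<\mu}e^{-p\mu-q\nu}\mu^{s-1}\nu^{t-1}\,d\mu\,d\nu$; the two $I$'s then tile the positive quadrant. The trade-off: your method yields an explicit closed evaluation of each $I$ individually (main term minus an incomplete beta), at the price of the uniform-in-$N$ Stirling bound needed to kill the shifted line (do make sure the abscissae $N+\tfrac12$ are kept at a fixed distance from the poles $s+n$, perturbing if $\Rep s\in\tfrac12+\Zb$, so that $1/\sin(\pi(s-\eta))$ stays bounded near the real axis) and the hypergeometric bookkeeping; the paper's route is shorter and makes the symmetry between the two terms transparent, but hides its analytic work in the phrase ``by a suitable choice of $L_{s,t}$'' justifying the Fubini interchange. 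Both proofs use the positivity of $p,q$ in the same essential way --- yours through $p/r,q/r\in(0,1)$, the paper's through convergence of the Laplace integrals.
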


\begin{proof}
From the usual integral representation of the gamma function,
it follows that
\[
I(s,t;p,r)
=\frac1{2\pi i}\int_{L_{s,t}}
\frac{\Gamma(1-s+\eta)\Gamma(-\eta)}{\Gamma(1-s)}
\left(\iint_{(\Rb_{>0})^2}
e^{-p\mu-r\nu} \mu^{s-\eta-1} \nu^{t+\eta-1}
d\mu d\nu
\right)
d\eta.
\]
By a suitable choice of $L_{s,t}$,
it can be shown that
the order of the integrations can be interchanged.
Hence, we see
\[
I(s,t;p,r)
=
\iint_{(\Rb_{>0})^2}
e^{-p\mu-r\nu} \mu^{s-1}\nu^{t-1}
\left(\frac1{2\pi i}\int_{L_{s,t}}
\frac{\Gamma(1-s+\eta)\Gamma(-\eta)}{\Gamma(1-s)}
(\nu/\mu)^\eta
d\eta\right)d\mu d\nu.
\]
Since the innermost integral is $(1+\nu/\mu)^{s-1}$
(see \cite[\S14.51, Corollary]{Whittaker-Watson}),
we obtain
\begin{align*}
I(s,t;p,r)
&=
\iint_{(\Rb_{>0})^2}
e^{-p\mu-r\nu} (\mu+\nu)^{s-1}\nu^{t-1}d\mu d\nu\\
&=\iint_{0<\nu<\mu} e^{-p\mu-q\nu} \mu^{s-1}\nu^{t-1}
d\mu d\nu.
\end{align*}
We note that
\[
I(t,s;q,r)
=\iint_{0<\mu<\nu} e^{-p\mu-q\nu} \mu^{s-1}\nu^{t-1}
d\mu d\nu,
\]
and so
the right hand side of (\ref{lem-eq1:GPFD1})
is equal to
\[
\iint_{(\Rb_{>0})^2}
e^{-p\mu-q\nu} \mu^{s-1}\nu^{t-1}
d\mu d\nu
=\frac{\Gamma(s)\Gamma(t)}{p^s q^t}.
\]
This is the desired result.
\end{proof}


\begin{Lem}
\label{lem:GPFD2}
Let $p,q,s,t$ be as in Lemma $\ref{lem:GPFD1}$.
If $p<q$ and $s,t\ne 1,2,\dotsc$, then
\begin{equation}
\label{lem-eq1:GPFD2}
\frac{\cos(\pi s)\Gamma(s)\Gamma(t)}{p^s q^t}
=J(s,t;p,q-p)+I(t,s;q,q-p),
\end{equation}
where
\[
J(s,t;p,q-p)
=\frac1{2\pi i}
\int_{L_{s,t}}
\frac{\Gamma(1-s+\eta)\Gamma(-\eta)}{\Gamma(1-s)}
\frac{\cos(\pi(s-\eta))\Gamma(s-\eta)}{p^{s-\eta}}
\frac{\Gamma(t+\eta)}{(q-p)^{t+\eta}}d\eta.
\]
\end{Lem}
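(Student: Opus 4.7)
The plan is to imitate the proof of Lemma \ref{lem:GPFD1}, carrying the $\cos(\pi(s-\eta))$ factor along and handling the branch issues it introduces. After substituting the integral representations $\Gamma(s-\eta)/p^{s-\eta}=\int_0^\infty e^{-p\mu}\mu^{s-\eta-1}d\mu$ and $\Gamma(t+\eta)/(q-p)^{t+\eta}=\int_0^\infty e^{-(q-p)\nu}\nu^{t+\eta-1}d\nu$ (both legitimate because $p>0$ and $q-p>0$) and interchanging the order of the triple integral by a suitable choice of $L_{s,t}$, the claim reduces to verifying
\[
J(s,t;p,q-p)=\iint_{(\Rb_{>0})^2}e^{-p\mu-(q-p)\nu}\mu^{s-1}\nu^{t-1}G(\nu/\mu)\,d\mu\,d\nu,
\]
where $G(z)=\frac{1}{2\pi i}\int_{L_{s,t}}\frac{\Gamma(1-s+\eta)\Gamma(-\eta)}{\Gamma(1-s)}\cos(\pi(s-\eta))z^\eta d\eta$.

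Evaluating $G(z)$ is the main step. Writing $\cos(\pi(s-\eta))=\frac12(e^{i\pi(s-\eta)}+e^{-i\pi(s-\eta)})$ and absorbing the factors $e^{\mp i\pi\eta}$ into $z^\eta$, the same Whittaker--Watson corollary used in Lemma \ref{lem:GPFD1} yields
\[
G(z)=\frac{e^{i\pi s}}{2}(1+ze^{-i\pi})^{s-1}+\frac{e^{-i\pi s}}{2}(1+ze^{+i\pi})^{s-1}.
\]
For $z\in(0,1)$ the two base quantities reduce to the real positive number $1-z$, giving $G(z)=\cos(\pi s)(1-z)^{s-1}$. For $z>1$ the base $1-z<0$ is approached by the two summands from opposite sides of the branch cut; tracking the arguments through the principal branch of $w^{s-1}$ produces phases $e^{\mp i\pi(s-1)}$ which combine with the prefactors $e^{\pm i\pi s}$ to give $G(z)=-(z-1)^{s-1}$.

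To finish, I would split the $\mu$-integration at $\mu=\nu$. On the region $\nu<\mu$ the integrand becomes $\cos(\pi s)e^{-p\mu-(q-p)\nu}(\mu-\nu)^{s-1}\nu^{t-1}$, and the substitution $\lambda=\mu-\nu$ decouples the variables and produces $\cos(\pi s)\Gamma(s)\Gamma(t)/(p^sq^t)$. On the region $\mu<\nu$ the integrand becomes $-e^{-p\mu-(q-p)\nu}(\nu-\mu)^{s-1}\nu^{t-1}$, and the substitution $\lambda=\nu-\mu$ followed by relabeling gives $-\iint_{0<\mu<\nu}e^{p\mu-q\nu}\mu^{s-1}\nu^{t-1}d\mu d\nu$, which is precisely $-I(t,s;q,q-p)$, as can be checked by rerunning the argument of Lemma \ref{lem:GPFD1} with the parameter $r$ taken to be $q-p$ rather than $p+q$ (the calculation still applies because $q-p>0$ ensures $e^{p\mu-q\nu}$ is integrable on $\{0<\mu<\nu\}$). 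Combining the two contributions yields \eqref{lem-eq1:GPFD2}. The main obstacle is the two-case evaluation of $G(z)$: since $G$ is not analytic across $z=1$, one must carefully track the principal branch of $(1+w)^{s-1}$ on each side of the cut to see that the Barnes integral genuinely delivers these two different closed forms with the signs that conspire correctly with the region-splitting in the final step.
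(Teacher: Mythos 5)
Your argument is correct, and all the delicate branch computations check out: with the Whittaker--Watson corollary applied at $\arg w=\mp\pi$ one indeed gets $G(z)=\cos(\pi s)(1-z)^{s-1}$ for $0<z<1$ and $G(z)=-(z-1)^{s-1}$ for $z>1$, and the region-splitting then reproduces $\cos(\pi s)\Gamma(s)\Gamma(t)p^{-s}q^{-t}-I(t,s;q,q-p)$ exactly as you claim. However, this is a genuinely different route from the paper's. The paper does not recompute anything: it observes that the decay estimate for the integrand of $I(s,t;p,r)$ lets the identity of Lemma \ref{lem:GPFD1} be continued holomorphically in $p$ into $\Cb\setminus\{\pm i\Rb_{\ge 0}\cup(-q\pm i\Rb_{\ge 0})\}$, evaluates it at $p e^{\pm i\pi}$ (so that $r=p+q$ becomes $q-p$), and averages the two boundary values to manufacture the cosine. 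That argument is shorter and sidesteps all branch bookkeeping, at the cost of a continuation step; yours is more computational but self-contained and makes visible \emph{where} the sign flip for $z>1$ comes from. One technical point you should address: unlike in Lemma \ref{lem:GPFD1}, your triple integral is \emph{not} absolutely convergent for $0<\Rep(s)\le 1$, because the factor $\cos(\pi(s-\eta))$ grows like $e^{\pi|\Imp\eta|}$ and cancels half of the exponential decay of $\Gamma(1-s+\eta)\Gamma(-\eta)$, leaving an $\eta$-integrand of size $|\Imp\eta|^{-\Rep(s)}$; so Fubini does not apply directly there, and the boundary case $|\arg w|=\pi$ of the Barnes formula is only conditionally convergent. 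The clean fix is to run your computation for $\Rep(s)>1$ (where everything is absolutely convergent and the dominated-convergence passage to $\arg w=\pm\pi$ is legitimate) and then extend to $\Rep(s)>0$ by analytic continuation in $s$, since both sides of \eqref{lem-eq1:GPFD2} are holomorphic there away from $s=1,2,\dotsc$.
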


\begin{proof}
For any $p,r\in\Cb^\times$,
the integrand in (\ref{lem-eq2:GPFD1})
is
\[
\ll {|\eta|}^{\Rep(t)-1} e^{-2\pi |\eta|}
e^{\Imp(\eta)(\arg r -\arg p)}
|p^{-s}||r^{-t}|
\]
as $\eta\rightarrow \pm i\infty$ on $L_{s,t}$,
where the implied constant does not depend on $p,r,\eta$.
This estimate ensures that, for any fixed $q\in\Rb_{>0}$,
the right hand side of (\ref{lem-eq1:GPFD1}) is
continued to
$\Cb\setminus\{\pm i\Rb_{\ge0}\cup (-q\pm i\Rb_{\ge 0})\}$
as a holomorphic function in $p$,
where the double-signs correspond,
and hence,
if $0<p<q$, then
\[
\frac{e^{\pm \pi i s}\Gamma(s)\Gamma(t)}{p^s q^t}
=\frac{\Gamma(s)\Gamma(t)}{(-p)^s q^t}
=I(s,t;-p,q-p)+I(t,s;q,q-p)
\]
and
\[
I(s,t;-p,q-p)
=\frac1{2\pi i}
\int_{L_{s,t}}
\frac{\Gamma(1-s+\eta)\Gamma(-\eta)}{\Gamma(1-s)}
\frac{e^{\pm \pi i(s-\eta)}\Gamma(s-\eta)}{p^{s-\eta}}
\frac{\Gamma(t+\eta)}{(q-p)^{t+\eta}}d\eta.
\]
Thus, we obtain Lemma \ref{lem:GPFD2}.
\end{proof}

\section{Proof of Theorem}
For simplicity of description,
we suppose that $\Rep(s),\Rep(t),\Rep(u)>2$,
$s,t\ne 3,4,5,\dotsc$
and the contour $L_{s,t}$
always satisfies the condition $-1/2\le \Rep(\eta)\le \Rep(s)-1/2$
for all $\eta\in L_{s,t}$.
We first evaluate
\[
\zeta(s,t;u)
=\sum_{m,n=1}^\infty \frac1{m^s n^t (m+n)^u}.
\]
Applying Lemma \ref{lem:GPFD1}
with $(p,q)=(m,n)$,
we see
\[
\zeta(s,t;u)
=X(s,t;u)+X(t,s;u),
\]
where
\[
X(s,t;u)
=\frac1{2\pi i}\sum_{m,n=1}^\infty
\int_{L_{s,t}}
\frac{\Gamma(s,t;\eta)}{m^{s-\eta}(m+n)^{t+u+\eta}}d\eta
\]
and
\[
\Gamma(s,t;\eta)=
\frac{\Gamma(1-s+\eta)\Gamma(-\eta)
\Gamma(s-\eta)\Gamma(t+\eta)}
{\Gamma(1-s)\Gamma(s)\Gamma(t)}.
\]
From the condition of $L_{s,t}$,
it follows that the order of summation and integration
of $X(s,t;u)$
can be interchanged.
As a result, we have
\begin{align}
X(s,t;u)
&=\frac1{2\pi i}
\int_{L_{s,t}}
\Gamma(s,t;\eta)\zeta(s-\eta,0;t+u+\eta)d\eta
\nonumber \\
&=\frac{\Gamma(s+t-1)}{\Gamma(s)\Gamma(t)}
\zeta(1,0;s+t+u-1)
\nonumber \\
&\quad
+\frac1{2\pi i}
\int_{L_{s-1,t}}
\Gamma(s,t;\eta)\zeta(s-\eta,0;t+u+\eta)d\eta.
\label{pf-eq:X(s,t;u)}
\end{align}

We next treat
$\cos(\pi t)\zeta(t,u;s)+\cos(\pi s)\zeta(u,s;t)$.
Set
\[
a_{m,n}(s,t;u)
=
\frac{\cos(\pi t)}{m^t n^u (m+n)^s}
+\frac{\cos(\pi s)}{n^u m^s (m+n)^t}
\]
for $m,n\in\Zb_{\ge 1}$.
Applying Lemma \ref{lem:GPFD2}
to each term,
we obtain
\[
a_{m,n}(s,t;u)
=b_{m,n}(s,t;u)+b_{m,n}(t,s;u),
\]
where
\begin{align*}
b_{m,n}(s,t;u)
&=
\frac{I(s,t;m+n,n)+J(s,t;m,n)}{\Gamma(s)\Gamma(t) n^u}\\
&=-
\frac{\Gamma(s+t-1)}{\Gamma(s)\Gamma(t)}
\frac1{n^{s+t+u-1}}
\left(\frac1m-\frac1{m+n}\right)\\
&\quad
+\frac1{2\pi i}
\int_{L_{s-1,t}}\Gamma(s,t;\eta)
\left(
\frac1{n^{t+u+\eta}(m+n)^{s-\eta}}
+\frac{\cos(\pi(s-\eta))}
{m^{s-\eta}n^{t+u+\eta}}
\right)d\eta.
\end{align*}
Put $Y(s,t;u)=\sum_{m,n=1}^\infty b_{m,n}(s,t;u)$.
Then, it is easily seen that
\[
\cos(\pi t)\zeta(t,u;s)+\cos(\pi s)\zeta(u,s;t)
=Y(s,t;u)+Y(t,s;u),
\]
and that
\begin{align}
Y(s,t;u)
&=
-\frac{\Gamma(s+t-1)}{\Gamma(s)\Gamma(t)}
\Big(
\zeta(1,0;s+t+u-1)+\zeta(s+t+u)
\Big)
\nonumber\\
&\quad
+\frac1{2\pi i}
\int_{L_{s-1,t}}
\Gamma(s,t;\eta)
\Big\{\zeta(t+u+\eta,0;s-\eta)
\nonumber\\
&\hspace{30mm}
+\cos(\pi(s-\eta))\zeta(s-\eta)\zeta(t+u+\eta)\Big\}
d\eta.
\label{pf-eq:Y(s,t;u)}
\end{align}

Combining (\ref{pf-eq:X(s,t;u)}) and (\ref{pf-eq:Y(s,t;u)}),
we have
\begin{align}
\lefteqn{X(s,t;u)+Y(s,t;u)}
\quad&
\nonumber\\
&=-\frac{\Gamma(s+t-1)}{\Gamma(s)\Gamma(t)}
\zeta(s+t+u)
\nonumber\\
&\quad
+\frac1{2\pi i}
\int_{L_{s-1,t}}\Gamma(s,t;\eta)
\{1+\cos(\pi (s-\eta))\}\zeta(s-\eta)\zeta(t+u+\eta)
d\eta
\nonumber\\
&\quad
-\frac1{2\pi i}\int_{L_{s-1,t}}
\Gamma(s,t;\eta)d\eta\,
\zeta(s+t+u)
\label{pf-eq:X+Y}
\end{align}
because generally
\begin{equation}
\label{pf-eq:harmonic product formula}
\zeta(s,0;t)
+\zeta(t,0;s)
=\zeta(s)\zeta(t)
-\zeta(s+t).
\end{equation}
The second term on the right hand side
of (\ref{pf-eq:X+Y})
becomes
\[
\frac{\Gamma(s+t)}{s\Gamma(s)\Gamma(t)}
\zeta(s+t+u)
+A(s,t;u)
\]
by shifting the contour to $L_{s+1,t}$,
and the third term is
\begin{align*}
&=
\frac{\Gamma(s+t-1)}{\Gamma(s)\Gamma(t)}
\zeta(s+t+u)
-\frac1{2\pi i}\int_{L_{s,t}}
\Gamma(s,t;\eta)d\eta\,
\zeta(s+t+u)\\
&=\frac{\Gamma(s+t-1)}{\Gamma(s)\Gamma(t)}
\zeta(s+t+u)-
\frac{\Gamma(s+t)}{t\Gamma(s)\Gamma(t)}
\zeta(s+t+u)
\end{align*}
by Barnes' lemma
(see \cite[14.52]{Whittaker-Watson}).
Hence,
\[
X(s,t;u)+Y(s,t;u)
=\left(\frac1s-\frac1t\right)
\frac{\Gamma(s+t)}{\Gamma(s)\Gamma(t)}\zeta(s+t+u)
+A(s,t;u).
\]
Thus, we have
\begin{align*}
Z(s,t;u)
&=X(s,t;u)+X(t,s;u)+Y(s,t;u)+Y(t,s;u)\\
&=A(s,t;u)+A(t,s;u),
\end{align*}
when $\Rep(s),\Rep(t),\Rep(u)>2$
and $s,t\ne 3,4,5,\dotsc$.
By the theory of analytic continuation,
the proof of Theorem is completed.

\begin{Rem}
We have used Li's method in this section,
but it is possible to prove Theorem
by Nakamura's original method.
Indeed,
all his argument is valid here
except for the property for the Bernoulli polynomial
\cite[(2,7)]{Nakamura06},
which can be proved
by the partial fractional decomposition
(\ref{eq:classical PFD})
in a similar way to Eisenstein's proof
for addition formulas of the trigonometric functions
(see \cite[Chapter II]{Weil76} or \cite[\S 2.1]{Sczech92}).
And so it can be reformulated to an available form in our case
by (a slightly generalized) Lemma \ref{lem:GPFD1}.
\end{Rem}

\section{Application}
In this section,
we will find some new results from Theorem.
The following each proposition
can be proved independently of the others.
However, the next lemma
seems to be useful for some applications,
and so we first state it in order to access quickly.

\begin{Lem}
\label{lem:evaluation of A}
Let $a$ be an integer
and $b,c$ be non-negative integers.
Set
\begin{equation}
\label{lem-eq:definition of F}
F(s,t;c)
=\sum_{k=0}^c \binom{c}{k}\zeta(s-k)\zeta(t-c+k)
\end{equation}
for $s,t\in\Cb$.
Then we have the followings:

$(1)$
For $t,u\in\Cb$ with $t+u\ne 1-l~(l=0,1,2,\dotsc)$,
\begin{equation}
\label{lem-eq:A(a,t;u)}
A(a,t;u)
=2\sum_{k=0}^{[a/2]}
\binom{t+a-2k-1}{a-2k}
\zeta(2k)\zeta(t+u+a-2k),
\end{equation}
where the value of any empty sum is defined to be $0$.

$(2)$
For $s,u\in\Cb$ with $u\ne b+1-l~(l=0,1,2,\dotsc)$,
\[
A(s,-b;u)
=
\sum_{k=0}^b
\binom{b}{k}
(\cos(\pi s)+(-1)^k)
\zeta(s-k)\zeta(u-b+k).
\]

$(3)$
For any $s\in\Cb$,
\[
\lim_{u\rightarrow -c}
A(s,-b;u)
=(\cos(\pi s)-(-1)^{b+c})
F(s,-c;b)
+\delta_{c0}(-1)^{b+1}\zeta(s-b),
\]
where
$\delta_{ij}$ denotes the Kronecker symbol.

$(4)$
For any $s\in\Cb$,
\begin{align*}
\lefteqn{\lim_{t\rightarrow -b}
A(s,t;-c)}
\quad &\\
&=(\cos(\pi s)-(-1)^{b+c})
\left(F(s,-c;b)
+\frac{(-1)^{c+1} b! c!}{(b+c+1)!}
\zeta(s-b-c-1)\right)\\
&\quad
+\delta_{c0}(-1)^{b+1}\zeta(s-b).
\end{align*}
\end{Lem}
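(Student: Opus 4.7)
My plan is to deduce all four assertions from the shifted representation of $A(s,t;u)$ given in the Remark following the Theorem,
\begin{equation*}
A(s,t;u)=2\sum_{k=0}^{K}\frac{(t)_k}{k!}\cos^2\!\left(\frac{\pi(s-k)}{2}\right)\zeta(s-k)\zeta(t+u+k)+\frac{\sin(\pi s)}{2\pi i}\int_{L_K}\!\cdots\,d\eta,
\end{equation*}
valid whenever $\Rep(s)<K+1/2$, $\Rep(t)>-K-1/2$, and $\Rep(t+u)>-K+1/2$; each of (1)--(4) is obtained by specialization or a limit.

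Part (1) comes from setting $s=a\in\Zb$: $\sin(\pi a)=0$ kills the integral, while $\cos^2(\pi(a-k)/2)$ equals $1$ when $a-k$ is even and $0$ otherwise; reindexing $k=a-2h$ and using $\binom{t+a-2h-1}{a-2h}=(t)_{a-2h}/(a-2h)!$ yields (\ref{lem-eq:A(a,t;u)}) (the empty-sum convention handles $a<0$). Part (2) comes from $t=-b$: the Pochhammer $(-b)_k$ vanishes for $k>b$, so the sum truncates at $k=b$ with $(-b)_k/k!=(-1)^k\binom{b}{k}$; the integral vanishes because on $L_K$ with $K\ge b$ the factor $\Gamma(-b+\eta)/\Gamma(-b)$ is pointwise zero (its numerator is regular while $1/\Gamma(-b)=0$), so the integrand identically vanishes on $L_K$; and $2\cos^2(\pi(s-k)/2)=1+(-1)^k\cos(\pi s)$ delivers the stated closed form.

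For (3), I pass to the limit $u\to -c$ in Part (2). Each $\zeta(u-b+k)$ with $0\le k\le b$ is holomorphic at $u=-c$ (its only pole lies at $u=b+1-k\in\{1,\dots,b+1\}$), so the limit is a term-by-term substitution. Writing the result as $\cos(\pi s)\,F(s,-c;b)+\sum_{k=0}^{b}(-1)^k\binom{b}{k}\zeta(s-k)\zeta(-b-c+k)$, the assertion reduces to the identity
\begin{equation*}
\sum_{k=0}^{b}\binom{b}{k}\bigl((-1)^k+(-1)^{b+c}\bigr)\zeta(s-k)\zeta(-b-c+k)=\delta_{c0}(-1)^{b+1}\zeta(s-b),
\end{equation*}
which I verify by observing that $(-1)^k+(-1)^{b+c}$ is nonzero only when $k\equiv b+c\pmod{2}$, in which case $-b-c+k$ is a non-positive even integer; since $\zeta(-2m)=0$ for $m\ge 1$, the sole surviving term is $k=b$, $c=0$, and $\zeta(0)=-1/2$ reproduces exactly the Kronecker contribution.

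For (4), I keep $u=-c$ fixed and let $t\to -b$ in the shifted formula, choosing $K\ge b+c+1$. The integral over $L_K$ vanishes in the limit by dominated convergence, since $\Gamma(t+\eta)/\Gamma(t)\to 0$ pointwise on $L_K$ while the remaining factors are locally bounded. The summands with $0\le k\le b$ reproduce, by the computation of (3), the contribution $(\cos(\pi s)-(-1)^{b+c})F(s,-c;b)+\delta_{c0}(-1)^{b+1}\zeta(s-b)$, while summands with $b<k\le K$ and $k\ne b+c+1$ vanish because $(t)_k$ has a simple zero at $t=-b$ whereas $\zeta(t-c+k)$ is regular there. The critical index $k=b+c+1$ yields a $0\cdot\infty$ form: writing $(t)_{b+c+1}=(t+b)\bigl((-1)^b b!\,c!+O(t+b)\bigr)$ and using $\Res_{z=1}\zeta(z)=1$, the product limits to $(-1)^b b!\,c!$, and together with
\begin{equation*}
2\cos^2\!\left(\frac{\pi(s-b-c-1)}{2}\right)=1-(-1)^{b+c}\cos(\pi s)=-(-1)^{b+c}\bigl(\cos(\pi s)-(-1)^{b+c}\bigr)
\end{equation*}
this produces the extra contribution $(\cos(\pi s)-(-1)^{b+c})\frac{(-1)^{c+1}b!\,c!}{(b+c+1)!}\zeta(s-b-c-1)$ stated in (4). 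The main obstacle is extracting this regular value from the indeterminate product and justifying the disappearance of the contour integral in the same limit; once these are handled, Parts (1)--(3) are routine specializations of the shifted formula.
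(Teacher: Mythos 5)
The proposal is correct and takes essentially the same approach as the paper: all four parts are read off from the shifted representation (\ref{eq:A-function shifted}), with (3) reducing to the vanishing of $\zeta$ at negative even integers and (4) obtained by the same limit computation plus the residue bookkeeping at the critical index $k=b+c+1$. The paper's own proof is merely a terser version of this argument, so the student's write-up amounts to filling in the details the paper leaves implicit.
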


\begin{proof}
(1),(2)
The formulas follow 
immediately from (\ref{eq:A-function shifted}).

(3)
We apply (2) to get
\begin{align*}
\lim_{u\rightarrow -c}A(s,-b;u)
&=(\cos(\pi s)-(-1)^{b+c})
\sum_{k=0}^{b-1}
\binom{b}{k}\zeta(s-k)\zeta(-b-c+k)\\
&\quad
+(\cos(\pi s)+(-1)^b)\zeta(s-b)\zeta(-c),
\end{align*}
where we have used the fact that
$\zeta(-b-c+k)=0$
if $0\le k\le b-1$ and $k\equiv b+c \pmod 2$.
Thus, by simple calculation, we obtain the result.

(4)
The result follows in a similar way to the above.
\end{proof}

We give integral representations of several zeta functions.

\begin{Prop}
\label{prop:representation of zetas}
$(1)$
The Euler double zeta function $\zeta(s,t)$
has the following representation:
\begin{align}
\lefteqn{(\cos(\pi t)-\cos(\pi s))\zeta(s,t)}
\quad &
\nonumber\\
&=A(s,t;0)+A(t,s;0)-(1+\cos(\pi s))\zeta(s)\zeta(t)
+\cos(\pi s)\zeta(s+t).
\label{prop-eq:representation of Euler double zeta}
\end{align}

$(2)$
Let $n$ be a non-negative integer. Then,
\begin{align}
\lefteqn{(1+\cos(\pi s))\zeta(s)\zeta(s+2n)}
\quad &
\nonumber\\
&=A(s,s+2n;0)+A(s+2n,s;0)
+\cos(\pi s)\zeta(2s+2n)
\label{prop-eq:product of zeta 1}
\end{align}
and
\begin{align*}
\lefteqn{(1+\cos(\pi s))\zeta(s)\zeta(-s+2n)}
\quad &\\
&=A(s,-s+2n;0)+A(-s+2n,s;0)
+\cos(\pi s)\zeta(2n)
+\delta_n(s),
\end{align*}
where
\[
\delta_n(s)=
\begin{cases}
-\pi s \sin(\pi s)/12 & \text{if $n=0$},\\
-\pi \sin(\pi s)/(s-1) & \text{if $n=1$},\\
0 & \text{otherwise}.
\end{cases}
\]
In particular, we obtain
\begin{align*}
\lefteqn{(1+\cos(\pi s))\zeta(s)^2-\cos(\pi s)\zeta(2s)
=2A(s,s;0)}
\quad &\\
&=\frac{2\sin(\pi s)}{2\pi i}
\int_{L}
\cot\left(\frac{\pi(s-\eta)}2\right)
\frac{\Gamma(s+\eta)\Gamma(-\eta)}{\Gamma(s)}
\zeta(s-\eta)\zeta(s+\eta)d\eta.
\end{align*}

$(3)$
The Witten zeta function of $SU(3)$
can be written as
\begin{align}
\lefteqn{2^{-s-1}(1+2\cos(\pi s))\zeta_{SU(3)}(s)
=A(s,s;s)}
\quad &
\nonumber\\
&=
\frac{\sin(\pi s)}{2\pi i}
\int_{L}
\cot\left(\frac{\pi(s-\eta)}2\right)
\frac{\Gamma(s+\eta)\Gamma(-\eta)}{\Gamma(s)}
\zeta(s-\eta)\zeta(2s+\eta)d\eta.
\label{prop-eq:integral representation of Witten zeta}
\end{align}
\end{Prop}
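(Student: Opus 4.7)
The plan is to derive each part of Proposition~\ref{prop:representation of zetas} by specializing the functional relation $Z(s,t;u)=A(s,t;u)+A(t,s;u)$ from the Theorem to appropriate values of $(s,t,u)$ and invoking elementary identities. For part~(1) I would set $u=0$ in the definition~(\ref{eq:def of Z}), using $\zeta(s,t;0)=\zeta(s)\zeta(t)$, $\zeta(t,0;s)=\zeta(s,t)$ and $\zeta(0,s;t)=\zeta(t,s)$ in the Euler-double convention $\zeta(u,t)=\zeta(0,t;u)$ recorded in the introduction. Then eliminating $\zeta(t,s)$ by the stuffle identity $\zeta(s,t)+\zeta(t,s)=\zeta(s)\zeta(t)-\zeta(s+t)$ (the Euler-double case of (\ref{pf-eq:harmonic product formula})) and substituting the Theorem gives~(\ref{prop-eq:representation of Euler double zeta}).

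For part~(3) I would specialize $s=t=u$ in the Theorem, yielding $Z(s,s;s)=2A(s,s;s)$, while~(\ref{eq:def of Z}) reads $Z(s,s;s)=(1+2\cos(\pi s))\zeta(s,s;s)$. Combined with $\zeta_{SU(3)}(s)=2^s\zeta(s,s;s)$ this produces the first equality in~(\ref{prop-eq:integral representation of Witten zeta}); the integral form follows from~(\ref{thm-eq:definition of A}) with $t=s$, $u=s$, so that $t+u+\eta=2s+\eta$.

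Part~(2) is the most delicate step. I would specialize the identity~(\ref{prop-eq:representation of Euler double zeta}) proved in part~(1) to $t=s+2n$ and to $t=-s+2n$; in both situations $\cos(\pi t)=\cos(\pi s)$, so the coefficient on the left-hand side vanishes. When $t=s+2n$ the Euler double zeta $\zeta(s,s+2n)$ is regular, hence the left-hand side is zero and rearrangement yields~(\ref{prop-eq:product of zeta 1}). When $t=-s+2n$, however, the locus $s+t=2n$ meets the singular set $\{s+t=2\}\cup\{s+t=1-l:l\ge 0\}$ of Euler's double zeta precisely for $n=1$ (pole at $s+t=2$) and for $n=0$ (pole at $s+t=0$); so I would write $\epsilon=t+s-2n$, use the expansion
\[
\cos(\pi t)-\cos(\pi s)=\pi\epsilon\sin(\pi s)+O(\epsilon^2)
\]
around $\epsilon=0$, and take the limit. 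The product $(\cos(\pi t)-\cos(\pi s))\zeta(s,t)$ then tends to $\pi\sin(\pi s)\,\Res\zeta(s,\cdot)$ at the relevant pole, whose negative is the stated defect $\delta_n(s)$. The "In particular" case is the $n=0$ instance of~(\ref{prop-eq:product of zeta 1}) combined with the integral expression~(\ref{thm-eq:definition of A}) for $A(s,s;0)$. The main obstacle is the explicit identification of the residues of Euler's double zeta along $s+t=2$ and $s+t=0$: they must come out to $1/(s-1)$ and $s/12$ respectively in order to match $\delta_1$ and $\delta_0$. These residues are classical and can be extracted from Matsumoto's Mellin--Barnes representation, from the functional equation established in the appendix of this paper, or by combining the stuffle identity with the known residues of $\zeta$.
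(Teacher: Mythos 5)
Your proposal is correct and follows essentially the same route as the paper: part (1) from $u=0$ in (\ref{eq:def of Z}) plus the stuffle relation (\ref{pf-eq:harmonic product formula}) and the Theorem, part (3) from $s=t=u$, and part (2) by taking the limit $t\to\pm s+2n$ in (\ref{prop-eq:representation of Euler double zeta}), with $\delta_n(s)$ arising exactly as $-\pi\sin(\pi s)$ times the residue of the Euler double zeta along $s+t=2$ and $s+t=0$. The paper pins down those residues ($1/(s-1)$ and $s/12$) by quoting the Laurent expansion from Matsumoto's Mellin--Barnes representation, which is one of the sources you name.
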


\begin{Rem}
We can regard
(\ref{prop-eq:product of zeta 1})
as a generalization of the formula
\begin{align*}
\lefteqn{\zeta(2l)\zeta(2m)-\frac12 \zeta(2l+2m)}
\quad &\\
&=\sum_{k=0}^{\max\{l,m\}}
\left\{\binom{2l+2m-2k-1}{2l-1}+\binom{2l+2m-2k-1}{2m-1}\right\}
\zeta(2k)\zeta(2l+2m-2k)
\end{align*}
for $l,m\in\Zb_{\ge 1}$.
Indeed, taking $s=2l~(l=1,2,\dotsc)$
in (\ref{prop-eq:product of zeta 1})
and putting $m=l+n$,
this follows from (\ref{lem-eq:A(a,t;u)}).
\end{Rem}

\begin{proof}[Proof of Proposition $\ref{prop:representation of zetas}$]
(1)
Substituting $u=0$ in (\ref{eq:def of Z})
and using (\ref{pf-eq:harmonic product formula}), we see
\begin{align*}
Z(s,t;0)
&=\zeta(s)\zeta(t)+\cos(\pi t)\zeta(t,0;s)
+\cos(\pi s)\zeta(0,s;t)\\
&=(\cos(\pi t)-\cos(\pi s))\zeta(t,0;s)
+(1+\cos(\pi s))\zeta(s)\zeta(t)-\cos(\pi s)\zeta(s+t).
\end{align*}
Hence, the result is shown by Theorem.

(2)
Assume that $\Rep(s)>1$.
Comparing the limits of
the both sides of
(\ref{prop-eq:representation of Euler double zeta})
as $t\rightarrow \pm s+2n$,
we get
\begin{align*}
\lefteqn{(1+\cos(\pi s))\zeta(s)\zeta(\pm s+2n)}
\quad &\\
&=A(s,\pm s+2n;0)+A(\pm s+2n,s;0)
+\cos(\pi s)\zeta(s\pm s+2n)\\
&\quad
-\lim_{z\rightarrow 0}
\{\cos(\pi (z\pm s))-\cos(\pi s)\}\zeta(z\pm s+2n,0;s),
\end{align*}
where the double-signs correspond.
It is clear that the limit becomes
$0$ unless the double-signs are ``$-$''
and $n=0,1$.
In the remaining cases,
the last term of the right side is $\delta_n(s)$ because
\[
\zeta(z-s+2n,0;s)
=\frac1{s-1}\zeta(z+2n-1)+\frac{s}{12}\zeta(z+2n+1)+O(1)
\]
as $z\rightarrow 0$
(see \cite[p.425, (4.4)]{Matsumoto02}).

(3)
The result follows immediately from (\ref{eq:def of Z})
and Theorem.
\end{proof}

We next extend
the parity result \cite[Theorem 2]{Huard-Williams-Zhang96}
to the whole domain of convergence.

\begin{Prop}
\label{prop:parity result}
Let $a,b,c$ be integers such that $a+b+c$ is odd.
Assume that $a+c\ge 2$, $b+c\ge 2$ and $a+b+c\ge 3$.
If $a+b\ge 2$, then
\[
2\zeta(a,b;c)
=(-1)^a\{A(c,a;b)+A(a,c;b)\}
+(-1)^b\{A(c,b;a)+A(b,c;a)\},
\]
where every $A$-value is representable
in the form $(\ref{lem-eq:A(a,t;u)})$.
If $a+b\le 1$, then
\begin{align*}
2\zeta(a,b;c)
&=(-1)^a\{A^*(c,a;b)+A(a,c;b)\}
+(-1)^b\{A^*(c,b;a)+A(b,c;a)\}\\
&\quad
+\frac{(-1)^a 2}{(1-a-b)!}
\left.\frac{d}{ds}(s+a)_{1-a-b}\right|_{s=0}
\zeta(a+b+c-1).
\end{align*}
Here
\[
A^*(c,a;b)=
2\sum_k
\binom{a+c-2k-1}{c-2k}\zeta(2k)\zeta(a+b+c-2k),
\]
where the sum is taken over all integers
$k\in[0,c/2]$ except for $k=(a+b+c-1)/2$.
\end{Prop}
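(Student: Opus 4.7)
The overall plan is to combine two instances of the Theorem so that the parity hypothesis on $a+b+c$ cancels the unwanted $\zeta$-terms on the $Z$-side and isolates $2\zeta(a,b;c)$. At integer arguments, $\cos(\pi m)=(-1)^m$, and the definition of $Z$ together with the symmetry $\zeta(s,t;u)=\zeta(t,s;u)$ gives
\[
Z(c,a;b)=\zeta(a,c;b)+(-1)^a\zeta(a,b;c)+(-1)^c\zeta(b,c;a),
\]
and likewise for $Z(c,b;a)$. The parity condition $a+b+c$ odd forces $(-1)^{a+c}=-(-1)^b$ and $(-1)^{b+c}=-(-1)^a$, so the combination $(-1)^a Z(c,a;b)+(-1)^b Z(c,b;a)$ collapses to $2\zeta(a,b;c)$. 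Applying the Theorem to each $Z$ then yields the master identity
\[
2\zeta(a,b;c)=(-1)^a\{A(c,a;b)+A(a,c;b)\}+(-1)^b\{A(c,b;a)+A(b,c;a)\},
\]
valid wherever both sides are regular.

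For $a+b\geq 2$, the hypotheses keep all four $A$-values off the singular locus $\{t+u=1-l\}$ from the Remark after the Theorem. Since each $A$ has an integer first argument, Lemma \ref{lem:evaluation of A}(1) rewrites it as the explicit finite sum (\ref{lem-eq:A(a,t;u)}), proving the first formula.

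When $a+b\leq 1$, both $A(c,a;b)$ and $A(c,b;a)$ sit on the singular locus (with $l=1-a-b\geq 0$), so the master identity must be interpreted as a limit. I would perturb $a$ and $b$ to $\tilde a=a+\epsilon_1,\ \tilde b=b+\epsilon_2$ (keeping $c$ fixed), apply the Theorem at $(c,\tilde a;\tilde b)$ and $(c,\tilde b;\tilde a)$, weight by $\cos(\pi\tilde a)$ and $\cos(\pi\tilde b)$ respectively, and add. Since $\cos(\pi(a+\epsilon))=(-1)^a+O(\epsilon^2)$, the left-hand combination tends to $2\zeta(a,b;c)$: the $O(\epsilon^2)$ coefficients multiplying the cross $\zeta$-terms kill their simple poles along $\tilde a+\tilde b=1-l$. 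On the right, $A(\tilde a,c;\tilde b)$ and $A(\tilde b,c;\tilde a)$ are regular and converge to $A(a,c;b)$, $A(b,c;a)$, while $A(c,\tilde a;\tilde b)$ and $A(c,\tilde b;\tilde a)$ must be expanded via Lemma \ref{lem:evaluation of A}(1); summands other than the one indexed by $k_0=(a+b+c-1)/2$ converge to the $A^*$-sums.

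The heart of the argument, and the main obstacle, is the $k_0$-contribution. Writing $n=1-a-b$, the two singular summands are $2\binom{-b+\epsilon_1}{n}\zeta(2k_0)\zeta(1+\epsilon_1+\epsilon_2)$ and $2\binom{-a+\epsilon_2}{n}\zeta(2k_0)\zeta(1+\epsilon_1+\epsilon_2)$. Reversing the product in the rising factorial gives the identity $(-s+a)_n=(-1)^n(s+b)_n$, which together with the parity relation $b+n+1\equiv a\pmod 2$ yields $(-1)^a\binom{-b}{n}+(-1)^b\binom{-a}{n}=0$ (cancelling the $1/(\epsilon_1+\epsilon_2)$ pole of $\zeta(1+\epsilon_1+\epsilon_2)$) as well as the matching derivative identity $(-1)^a\frac{d}{d\epsilon_1}\binom{-b+\epsilon_1}{n}|_0=(-1)^b\frac{d}{d\epsilon_2}\binom{-a+\epsilon_2}{n}|_0$. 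Calling this common value $D$, the linear-in-$\epsilon$ bracket equals $(\epsilon_1+\epsilon_2)D+O(\epsilon^2)$, so its product with $\zeta(1+\epsilon_1+\epsilon_2)$ has finite limit $D$ independent of the rate, and multiplication by $2\zeta(2k_0)$ identifies it with $\frac{(-1)^a 2}{(1-a-b)!}\frac{d}{ds}(s+a)_{1-a-b}|_{s=0}\zeta(a+b+c-1)$. The difficulty is organising this two-parameter limit cleanly and verifying the two parity-combinatorial cancellations; once these are in hand, assembling the regular and singular parts completes the proof.
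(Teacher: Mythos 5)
Your proposal follows essentially the same route as the paper: the identity $2\zeta(a,b;c)=(-1)^aZ(c,a;b)+(-1)^bZ(c,b;a)$ (which the paper extracts from (\ref{eq:representation of zeta by Z-function}) by letting $u\to c$ first), the Theorem, Lemma \ref{lem:evaluation of A}(1) in the regular case, and in the case $a+b\le 1$ the isolation of the summand $k_0=(a+b+c-1)/2$ together with the two parity cancellations $(-1)^a(a)_n+(-1)^b(b)_n=0$ and the matching derivative identity; your computation of the finite part agrees with the paper's stated answer. The one inaccuracy is the claim that the two-parameter limit is ``independent of the rate'': the bracket is $(\epsilon_1+\epsilon_2)D+O(\epsilon^2)$, and multiplying by $\zeta(1+\epsilon_1+\epsilon_2)\sim(\epsilon_1+\epsilon_2)^{-1}$ does not control the quadratic terms along paths where $\epsilon_1+\epsilon_2$ vanishes to higher order than $\epsilon_1,\epsilon_2$ (e.g.\ $\epsilon_2=-\epsilon_1+\epsilon_1^{3}$), and the same issue affects your cancellation of the cross $\zeta$-terms on the left-hand side. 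The paper sidesteps this by taking coordinate-wise iterated limits ($u\to c$, then $t\to b$, then $s\to a$), which amounts to setting $\epsilon_2=0$ before letting $\epsilon_1\to 0$; restricting your argument to such an iterated limit repairs it with no other change.
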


\begin{proof}
Taking the limits of the both sides of
(\ref{eq:representation of zeta by Z-function})
as $u\rightarrow c$, $t\rightarrow b$ and $s\rightarrow a$ in order,
we have
\begin{align*}
2\zeta(a,b;c)
&=(-1)^a A(a,c;b)+(-1)^b A(b,c;a)\\
&\quad
+\lim_{s\rightarrow a} ((-1)^a A(c,s;b)+(-1)^b A(c,b;s)).
\end{align*}
It is easily seen that
the limiting value equals
$(-1)^a A(c,a;b)+(-1)^b A(c,b;a)$ if $a+b\ge 2$, and
\begin{align*}
&(-1)^a A^*(c,a;b)+(-1)^b A^*(c,b;a)\\
&+\frac{(-1)^a 2}{(1-a-b)!}
\left.\frac{d}{ds}(s+a)_{1-a-b}\right|_{s=0}
\zeta(a+b+c-1)
\end{align*}
if $a+b\le 1$.
Thus, we obtain Proposition \ref{prop:parity result}.
\end{proof}

The following proposition
suggests that
$\zeta(s,t;u)$
can be represented as
a sum of products of the Riemann zeta functions,
if at least two of $s$, $t$ and $u$
are non-positive integers
in the sense of the coordinate-wise limit.

\begin{Prop}
Let $a$, $b$ and $c$ be non-negative integers
and let $s$, $t$ and $u$ be complex numbers.

$(1)$
If $s,t\ne c+1-l$ $(l=0,1,2,\dotsc)$ and $s+t\ne c+2$,
then
$\zeta(s,t;-c)=F(s,t;c)$,
where $F(s,t;c)$ is defined by
$(\ref{lem-eq:definition of F})$.

$(2)$
If $u\ne a+1-l,b+1-l,a+b+2$ $(l=0,1,2,\dotsc)$, then
\begin{align*}
\zeta(-a,-b;u)
&=
(-1)^{a+1}F(u,-a;b)+(-1)^{b+1}F(u,-b;a)\\
&\quad
+\frac{a!b!}{(a+b+1)!}\zeta(u-a-b-1)
-\delta_{a0}\zeta(u-b)-\delta_{b0}\zeta(u-a).
\end{align*}

$(3)$
For $s\in\Cb$ with $s\ne c+1-l,b+c+2$ $(l=0,1,2,\dotsc)$,
\[
\lim_{u\rightarrow -c}\zeta(s,-b;u)
=F(s,-b;c)
+\frac{(-1)^{b+1}b!c!}{(b+c+1)!}\zeta(s-b-c-1).
\]

\end{Prop}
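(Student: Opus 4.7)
The plan is to derive each part by substituting the Theorem $Z(s,t;u)=A(s,t;u)+A(t,s;u)$ into the representation (\ref{eq:representation of zeta by Z-function}), which writes $\zeta(s,t;u)$ as a linear combination of $Z$-values divided by $\Delta(s,t,u)$, and then evaluating the resulting $A$-values via Lemma \ref{lem:evaluation of A}. The three parts correspond to sending one, two, or three of $s,t,u$ to non-positive integers, and the key issue in each case is whether the chosen integer point lies on the singular locus $t+u=1-l$ of $A$, which determines whether a direct evaluation (Lemma \ref{lem:evaluation of A}(1)--(2)) or a directional limit (Lemma \ref{lem:evaluation of A}(3)--(4)) is needed.

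For Part (1), I would take $u=-c$ in (\ref{eq:representation of zeta by Z-function}). Then $1-\cos^2(\pi u)=0$ kills the first term on the right, while $\Delta(s,t,-c)=-(\cos(\pi s)-(-1)^c\cos(\pi t))^2$ and each of the two remaining coefficients carries exactly one factor of $\cos(\pi s)-(-1)^c\cos(\pi t)$. Expanding $Z(t,-c;s)$ and $Z(-c,s;t)$ by the Theorem, Lemma \ref{lem:evaluation of A}(1) gives $A(-c,\cdot\,;\cdot)=-\delta_{c0}\zeta(\,\cdot\,)$ (the defining sum is empty for $c\ge1$ and picks up one term for $c=0$), and Lemma \ref{lem:evaluation of A}(2) gives $A(s,-c;t)=\cos(\pi s)F(s,t;c)+G(s,t;c)$ with $G(s,t;c):=\sum_{k=0}^c\binom{c}{k}(-1)^k\zeta(s-k)\zeta(t-c+k)$. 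The elementary symmetries $F(s,t;c)=F(t,s;c)$ and $G(t,s;c)=(-1)^cG(s,t;c)$ then force the $G$-contributions and the $\delta_{c0}\zeta(s+t)$ pieces to cancel after I divide by the common factor $\cos(\pi s)-(-1)^c\cos(\pi t)$, leaving exactly $\zeta(s,t;-c)=F(s,t;c)$.

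For Parts (2) and (3), the same scheme applies, but now some of the $A$-values sit on the singular locus of $A$ and must be defined by the directional limits in Lemma \ref{lem:evaluation of A}(3)--(4). In Part (3), setting $t=-b$ in (\ref{eq:representation of zeta by Z-function}) and letting $u\to -c$ yields $\Delta(s,-b,u)\to -(\cos(\pi s)-(-1)^{b+c})^2$; the $Z$-values on the right are then computed by the Theorem together with Lemma \ref{lem:evaluation of A}(1) (which makes the $A(-b,\cdot\,;\cdot)$-type terms collapse to $-\delta_{b0}\zeta(\cdot)$), Lemma \ref{lem:evaluation of A}(2) (used on $A(u,-b;s)$ and evaluated term-by-term as $u\to -c$), and Lemma \ref{lem:evaluation of A}(4) (applied with $b,c$ exchanged, for the $A(s,-c;-b)$ on the singular locus); it is precisely this use of (4) that produces the extra term $\frac{(-1)^{b+1}b!c!}{(b+c+1)!}\zeta(s-b-c-1)$. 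In Part (2), the substitution $s=-a$, $t=-b$ draws $A$-values from all four cases of Lemma \ref{lem:evaluation of A} (with $\Delta(-a,-b,u)=-(\cos(\pi u)-(-1)^{a+b})^2$), and the term $\frac{a!b!}{(a+b+1)!}\zeta(u-a-b-1)$ likewise originates from a Lemma \ref{lem:evaluation of A}(4) contribution. The main obstacle will be the combinatorial bookkeeping: tracking the empty-sum and $\delta_{a0},\delta_{b0},\delta_{c0}$ corrections from Lemma \ref{lem:evaluation of A}(1), keeping straight the different variable substitutions needed to invoke Lemma \ref{lem:evaluation of A}(3) versus (4), and verifying that the potentially vanishing factors $\cos(\pi s)-(-1)^c\cos(\pi t)$, $\cos(\pi s)-(-1)^{b+c}$, and $\cos(\pi u)-(-1)^{a+b}$ cancel cleanly from numerator and denominator so that the stated formulas hold on the full regular domain described in each part.
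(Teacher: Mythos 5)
Your plan is correct, and for parts (2) and (3) it is essentially the paper's own argument: the paper proves (2) by taking the limits of both sides of (\ref{eq:representation of zeta by Z-function}) as $t\to-b$ and then $s\to-a$ and feeding the resulting $A$-values into Lemma \ref{lem:evaluation of A}, exactly as you propose (note that the paper fixes the \emph{order} of the iterated limits, which you should also do explicitly, since the individual $Z$- and $A$-terms are singular at the final point even though $\zeta(-a,-b;u)$ is not); and it remarks that (3) can be done the same way, although it prefers the shortcut of reading the claim off Matsumoto's integral representation \cite[(5.3)]{Matsumoto02}. The one place where you genuinely diverge is part (1): the paper's proof is the single line
\[
\zeta(s,t;-c)=\sum_{m,n\ge 1}\frac{(m+n)^c}{m^s n^t}
=\sum_{k=0}^{c}\binom{c}{k}\zeta(s-k)\zeta(t-c+k)=F(s,t;c)
\]
by the binomial theorem on the region of convergence, extended by analytic continuation — which is why the paper calls it ``trivial.'' Your route through $u=-c$ in (\ref{eq:representation of zeta by Z-function}), the factorization $\Delta(s,t,-c)=-(\cos(\pi s)-(-1)^c\cos(\pi t))^2$, and the symmetries of $F$ and $G$ does close up correctly (the $G$- and $\delta_{c0}$-contributions cancel as you say), but it costs the full strength of the Theorem to prove an identity that needs none of it, and it forces you to argue away spurious singularities of the intermediate $Z$-terms (e.g.\ on $s+t=1$ when $c=0$) by a density/continuation argument. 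I would keep your method for (2) and (3) and replace (1) by the direct expansion.
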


\begin{proof}
(1) This is trivial.

(2)
We take the limits of the both sides
of (\ref{eq:representation of zeta by Z-function})
as $t\rightarrow -b$ and $s\rightarrow -a$ in order.
Then, the result is a direct consequence
of Lemma \ref{lem:evaluation of A}.

(3)
The result can be proved in a similar way to (2),
but it follows easily from
the representation
\cite[(5.3)]{Matsumoto02}
of $\zeta(s,t;u)$.
\end{proof}

\begin{Cor}
Let $a$, $b$ and $c$ be non-negative integers.

$(1)$
\[
\lim_{(s,t)\rightarrow (-a,-b)}
\zeta(s,t;-c)
=F(-a,-b;c).
\]

$(2)$
\[
\lim_{u\rightarrow -c}
\zeta(-a,-b;u)
=F(-a,-b;c)
+
\left(\frac{(-1)^{a+1}a!c!}{(a+c+1)!}
+\frac{(-1)^{b+1}b!c!}{(b+c+1)!}\right)
\zeta(-a-b-c-1).
\]

$(3)$
\[
\lim_{s\rightarrow -a}\lim_{u\rightarrow -c}\zeta(s,-b;u)
=F(-a,-b;c)
+\frac{(-1)^{b+1}b!c!}{(b+c+1)!}\zeta(-a-b-c-1).
\]

$(4)$
\[
\lim_{t\rightarrow -b}\lim_{u\rightarrow -c}\zeta(-a,t;u)
=F(-a,-b;c)
+\frac{(-1)^{a+1}a!c!}{(a+c+1)!}\zeta(-a-b-c-1).
\]
\end{Cor}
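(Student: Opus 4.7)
The plan is to obtain each of the four parts by specializing the corresponding clause of the preceding Proposition and passing to the remaining limit, using throughout the observation that the finite sum
\[
F(s,t;c)=\sum_{k=0}^{c}\binom{c}{k}\zeta(s-k)\zeta(t-c+k)
\]
is entire in $(s,t)$ near $(-a,-b)$: every argument $-a-k$, $-b-c+k$ appearing there (for $0\le k\le c$) is a non-positive integer and therefore avoids the pole of $\zeta$ at $1$.

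For part (1), I invoke Proposition (1) along a sequence approaching $(-a,-b)$ through points where the excluded conditions $s,t\ne c+1-l$ and $s+t\ne c+2$ are satisfied; since $F(s,t;c)$ is continuous at $(-a,-b)$, the joint limit equals $F(-a,-b;c)$. For part (3), Proposition (3) expresses $\lim_{u\to -c}\zeta(s,-b;u)$ as $F(s,-b;c)+\tfrac{(-1)^{b+1}b!c!}{(b+c+1)!}\zeta(s-b-c-1)$, in which both terms are holomorphic at $s=-a$; the limit $s\to -a$ is therefore evaluated pointwise. Part (4) reduces to (3) through the evident symmetry $\zeta(s,t;u)=\zeta(t,s;u)$ (preserved by analytic continuation), combined with the identity $F(-b,-a;c)=F(-a,-b;c)$ obtained via the substitution $k\mapsto c-k$ in the defining sum.

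Part (2) is the most delicate, and I expect the main obstacle here. Applying Proposition (2) and letting $u\to -c$ is legitimate because each of $F(u,-a;b)$, $F(u,-b;a)$, $\zeta(u-a-b-1)$, $\zeta(u-b)$, $\zeta(u-a)$ is regular at $u=-c$. The pointwise evaluation yields
\[
(-1)^{a+1}F(-c,-a;b)+(-1)^{b+1}F(-c,-b;a)+\frac{a!b!}{(a+b+1)!}\zeta(-a-b-c-1)-\delta_{a0}\zeta(-b-c)-\delta_{b0}\zeta(-a-c),
\]
and the remaining task is to rewrite this as $F(-a,-b;c)+\bigl(\tfrac{(-1)^{a+1}a!c!}{(a+c+1)!}+\tfrac{(-1)^{b+1}b!c!}{(b+c+1)!}\bigr)\zeta(-a-b-c-1)$. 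This amounts to a non-trivial identity among values of $\zeta$ at non-positive integers (equivalently, among Bernoulli numbers), which I would establish either by direct Bernoulli-number expansion, or more conceptually by noting that the values stated in (1), (3), (4) satisfy the linear combination $(2)=(3)+(4)-(1)$; one then reduces part (2) to the already-proved parts by verifying the corresponding additive relation among the iterated limits of $\zeta(s,t;u)$ themselves.
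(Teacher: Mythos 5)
Your treatment of parts (1), (3) and (4) is correct and is essentially the paper's: each follows by evaluating the corresponding part of the preceding Proposition at the remaining argument, which is legitimate because the right-hand sides there are regular at the point in question (the excluded values in the Proposition are removable for the limit), together with the symmetries $\zeta(s,t;u)=\zeta(t,s;u)$ and $F(s,t;c)=F(t,s;c)$ for part (4).

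Part (2), however, contains a genuine gap, and it is exactly the step you flag but do not carry out. After evaluating Proposition (2) at $u=-c$ you must convert
$(-1)^{a+1}F(-c,-a;b)+(-1)^{b+1}F(-c,-b;a)+\frac{a!b!}{(a+b+1)!}\zeta(-a-b-c-1)-\delta_{a0}\zeta(-b-c)-\delta_{b0}\zeta(-a-c)$
into $F(-a,-b;c)+\bigl(\frac{(-1)^{a+1}a!c!}{(a+c+1)!}+\frac{(-1)^{b+1}b!c!}{(b+c+1)!}\bigr)\zeta(-a-b-c-1)$. This is precisely the content of the paper's convolution Lemma
$(-1)^{a+b}F(-a,-b;c)+(-1)^{b+c}F(-b,-c;a)+(-1)^{c+a}F(-c,-a;b)=\bigl(\frac{(-1)^ca!b!}{(a+b+1)!}+\frac{(-1)^ab!c!}{(b+c+1)!}+\frac{(-1)^bc!a!}{(c+a+1)!}\bigr)\zeta(-a-b-c-1)+\delta_{a0}\delta_{b0}\delta_{c0}$
(equivalent to Chu--Wang's Theorem 2, and proved in the paper by a generating-function/formal-residue computation with the series $\tilde P_m(x)$), combined with the parity relation $(-1)^{a+b+c}F(-a,-b;c)-\delta_{a0}\zeta(-b-c)-\delta_{b0}\zeta(-a-c)-\delta_{a0}\delta_{b0}\delta_{c0}=F(-a,-b;c)$. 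Neither is routine, and your first suggested route (``direct Bernoulli-number expansion'') amounts to reproving Chu--Wang. Your second, ``conceptual'' route is not a proof as stated: the observation that the \emph{answers} satisfy $(2)=(3)+(4)-(1)$ gives nothing unless you independently establish that the four iterated/joint limits of $\zeta(s,t;u)$ at the point of indeterminacy $(-a,-b,-c)$ satisfy the same linear relation, and that relation is exactly as hard as the identity you are trying to avoid — there is no general principle forcing different orders of limits of a meromorphic function at a point lying on several polar divisors to combine linearly. To close the gap you would need either to prove the convolution formula (e.g.\ by the generating-function argument or by citing Chu--Wang) or to analyze the local polar structure of $\zeta(s,t;u)$ along the hyperplanes $s+u=1-(a+c+1)$ and $t+u=1-(b+c+1)$ carefully enough to justify the additive relation among the limits.
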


\begin{Rem}
Komori \cite{Komori08}
studied
Tornheim's double zeta values
for coordinate-wise limits
at non-positive integers
and
gave their explicit expressions
in terms of generalized Bernoulli numbers.
Our formulation seems to be more concrete than his.
\end{Rem}

To prove (2),
we have to use the following lemma
and the relation
\[
(-1)^{a+b+c}F(-a,-b;c)-\delta_{a0}\zeta(-b-c)-\delta_{b0}\zeta(-a-c)
-\delta_{a0}\delta_{b0}\delta_{c0}
=F(-a,-b;c).
\]

\begin{Lem}
Let $a$, $b$ and $c$ be non-negative integers.
Then,
\begin{align}
\lefteqn{(-1)^{a+b} F(-a,-b;c)
+(-1)^{b+c}F(-b,-c;a)+(-1)^{c+a}F(-c,-a;b)}
\quad &
\nonumber\\
&
=\left(\frac{(-1)^c a!b!}{(a+b+1)!}
+\frac{(-1)^a b!c!}{(b+c+1)!}
+\frac{(-1)^b c!a!}{(c+a+1)!}\right)\zeta(-a-b-c-1)
\nonumber\\
&\quad
+\delta_{a0}\delta_{b0}\delta_{c0}.
\label{lem-eq:convolution formula}
\end{align}
\end{Lem}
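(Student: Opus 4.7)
The plan is to derive this identity by applying the Theorem at the triple non-positive integer point $(-a,-b,-c)$ and then combining three cyclic instances together with the star relation stated just above. First, I would take iterated limits in $Z(s,t;u)=A(s,t;u)+A(t,s;u)$ as $s\to-a$ and $t\to-b$, leaving $u$ as a free variable. The left-hand side is evaluated by expanding $Z$ via \eqref{eq:def of Z} and applying parts $(2)$ and $(3)$ of the preceding Proposition to each of the three $\zeta$-values, while the right-hand side is evaluated via Lemma~\ref{lem:evaluation of A}$(2)$. Setting $u=-c$ via Lemma~\ref{lem:evaluation of A}$(3)$ then produces a first relation of the form
\begin{align*}
(\mathcal{E}_{a,b,c}):\quad &(\alpha-\beta\gamma)F(-c,-a;b)+(\beta-\alpha\gamma)F(-b,-c;a)+\frac{a!b!}{(a+b+1)!}\zeta(-a-b-c-1)\\
&\quad +\delta_{a0}\zeta(-b-c)+\delta_{b0}\zeta(-a-c)-\delta_{c0}(\alpha+\beta)\zeta(-a-b)=0,
\end{align*}
where $\alpha=(-1)^a$, $\beta=(-1)^b$, $\gamma=(-1)^c$.

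Next, by the cyclic permutations $(a,b,c)\mapsto(b,c,a)\mapsto(c,a,b)$ I obtain two further identities $(\mathcal{E}_{b,c,a})$ and $(\mathcal{E}_{c,a,b})$, and I form the weighted combination $\gamma\cdot(\mathcal{E}_{a,b,c})+\alpha\cdot(\mathcal{E}_{b,c,a})+\beta\cdot(\mathcal{E}_{c,a,b})=0$. A direct calculation shows that in this sum the coefficient of $\zeta(-a-b-c-1)$ equals exactly $\gamma\frac{a!b!}{(a+b+1)!}+\alpha\frac{b!c!}{(b+c+1)!}+\beta\frac{c!a!}{(c+a+1)!}$, matching the right-hand side of the lemma; the coefficients of $F(-a,-b;c)$, $F(-b,-c;a)$, $F(-c,-a;b)$ collapse respectively to $2(\alpha\beta-\gamma)$, $2(\beta\gamma-\alpha)$, $2(\alpha\gamma-\beta)$; and each of the three spurious products $\delta_{a0}\zeta(-b-c)$, $\delta_{b0}\zeta(-a-c)$, $\delta_{c0}\zeta(-a-b)$ acquires a coefficient containing the factor $(1-\alpha)$, $(1-\beta)$, $(1-\gamma)$ respectively, and therefore vanishes identically wherever the associated Kronecker $\delta$ is nonzero.

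Finally, I would invoke the star relation printed immediately before the lemma, rewritten (after multiplication by $\gamma$) in the equivalent form $(\alpha\beta-\gamma)F(-a,-b;c)=\gamma\bigl(\delta_{a0}\zeta(-b-c)+\delta_{b0}\zeta(-a-c)+\delta_{a0}\delta_{b0}\delta_{c0}\bigr)$, together with its two cyclic variants. Substituting these into the weighted sum converts the coefficients $(\alpha\beta-\gamma),(\beta\gamma-\alpha),(\gamma\alpha-\beta)$ into the signs $(-1)^{a+b},(-1)^{b+c},(-1)^{c+a}$ appearing on the left-hand side of the lemma (up to the overall factor $2$); the single-$\delta$ corrections reintroduced cancel in pairs by a parity argument analogous to the previous step; and the three triple-$\delta$ contributions combine to produce precisely the term $\delta_{a0}\delta_{b0}\delta_{c0}$ with coefficient $1$ on the right-hand side.

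The principal obstacle is the careful bookkeeping of Kronecker-$\delta$ corrections throughout: the three identities $(\mathcal{E}_{\cdot,\cdot,\cdot})$ each carry several spurious $\zeta$-products whose cancellation in the weighted sum, and whose partial reappearance and recombination after the three applications of the star relation, must all be checked term by term. Some additional care is required at step one, since $(-a,-b,-c)$ lies at the intersection of the singular hyperplanes $s+u=1-l$, $t+u=1-l$, $s+t+u=2$ of both $Z$ and $A$, so that the directional limits used to define $(\mathcal{E}_{a,b,c})$ must be chosen consistently with the meromorphic identity $Z=A+A$.
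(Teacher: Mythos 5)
Your strategy cannot produce this lemma, for a structural reason. The relation you call $(\mathcal{E}_{a,b,c})$ is vacuous: part $(2)$ of the preceding Proposition is itself obtained by solving the linear system formed by the three relations $Z=A+A$ via \eqref{eq:representation of zeta by Z-function}, and at $(s,t)=(-a,-b)$ with $u$ generic that system is nondegenerate, since $\Delta(-a,-b,u)=-\bigl((-1)^{a+b}-\cos(\pi u)\bigr)^2\neq 0$. Substituting the solution back into the single equation $Z(-a,-b;u)=A(-a,-b;u)+A(-b,-a;u)$ therefore returns a tautology. Concretely, $F(u,-a;b)$ enters Proposition $(2)$ with coefficient $(-1)^{a+1}$ and enters the term $\cos(\pi s)\zeta(u,s;t)$ of $Z$ with coefficient $(-1)^{a}$; these cancel, as do the $F(u,-b;a)$ and $\zeta(u-a-b-1)$ contributions, leaving only Kronecker-delta debris that matches the $A$-side term by term. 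No relation among $F(-a,-b;c)$, $F(-b,-c;a)$, $F(-c,-a;b)$ survives. Even granting your $(\mathcal{E})$'s as written, observe that $\alpha-\beta\gamma=\alpha\,(1-\alpha\beta\gamma)=\alpha\,(1-(-1)^{a+b+c})$, and similarly for every $F$-coefficient: they all vanish identically when $a+b+c$ is even, which is precisely the only substantive case of the lemma (for $a+b+c$ odd, the vanishing of $\zeta$ at negative even integers reduces both sides to delta bookkeeping; for $a+b+c$ even the lemma is a genuine Euler-type quadratic Bernoulli convolution, e.g.\ $a=2$, $b=c=0$ gives $2\zeta(-1)^2=\tfrac53\zeta(-3)$). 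Your final step compounds this: the star relation states that $(\alpha\beta-\gamma)F(-a,-b;c)$ \emph{equals} a pure delta expression, so substituting it eliminates the $F$'s altogether rather than converting $2(\alpha\beta-\gamma)$ into $(-1)^{a+b}$; the best available splitting, $2(\alpha\beta-\gamma)F=\alpha\beta F-\gamma F+\gamma(\text{deltas})$, leaves a cyclic remainder $-\gamma F(-a,-b;c)-\alpha F(-b,-c;a)-\beta F(-c,-a;b)$ equal to $-(-1)^{a+b+c}$ times the target left-hand side, so the target again appears only with the factor $1-(-1)^{a+b+c}$.

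The underlying point is that this lemma is not a consequence of the functional relation $Z=A+A$ at all: it is an arithmetic identity about Bernoulli numbers (equivalent to Theorem 2 of Chu--Wang), and the paper uses it as an \emph{input} elsewhere, e.g.\ with $a=b=c$ in the proof of Proposition \ref{prop:Witten zeta values}$(4)$ and to pass from Proposition $(2)$ to Corollary $(2)$. The paper's proof is by generating functions: one sets $\tilde{P}_m(x)=\delta_{m0}+(-1)^m m!\,x^{-m-1}+2^{m+1}\sum_{k\ge0}(-1)^{m+k}\zeta(-m-k)(2x)^k/k!$ and computes the formal residue $[x^{-1}]$ of the product $(\tilde{P}_a-\delta_{a0})(\tilde{P}_b-\delta_{b0})(\tilde{P}_c-\delta_{c0})$ in two ways, once by expanding term by term (which yields both sides' $F$- and factorial terms) and once via Proposition 3.1 of \cite{Onodera} (which evaluates it as $\delta_{a0}\delta_{b0}\delta_{c0}$ up to normalization). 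If you want an independent proof, that combinatorial route, or a direct appeal to Chu--Wang, is the way to go; the analytic relation of the Theorem cannot see this identity.
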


This lemma is equivalent to
Theorem 2 of Chu--Wang \cite{Chu-Wang10}.
However, their formulation is quite different from ours,
and so we now prove it for the reader's convenience.

\begin{proof}
For a non-negative integer $m$, we set
\[
\tilde{P}_m(x)
=\delta_{m0}+(-1)^m \frac{m!}{x^{m+1}}
+2^{m+1}\sum_{k=0}^\infty
(-1)^{m+k}\zeta(-m-k)\frac{(2x)^k}{k!}.
\]
We calculate the value
\[
R=2^{-a-b-c-2}[x^{-1}]
(\tilde{P}_a(x)-\delta_{a0})
(\tilde{P}_b(x)-\delta_{b0})
(\tilde{P}_c(x)-\delta_{c0})
\]
in two ways, where
$[x^{-1}]f(x)$ denotes
the formal residue of a formal Laurent series $f(x)$.
We first use the definition of $\tilde{P}_m(x)$ to obtain
\begin{align*}
R&=
(-1)^{a+b} F(-a,-b;c)
+(-1)^{b+c}F(-b,-c;a)+(-1)^{c+a}F(-c,-a;b)\\
&\quad
-\left(\frac{(-1)^c a!b!}{(a+b+1)!}
+\frac{(-1)^a b!c!}{(b+c+1)!}
+\frac{(-1)^b c!a!}{(c+a+1)!}\right)\zeta(-a-b-c-1).
\end{align*}
We next apply Proposition 3.1 in \cite{Onodera}
to get
$R=\delta_{a0}\delta_{b0}\delta_{c0}$.
Thus, we have (\ref{lem-eq:convolution formula}).
\end{proof}

We finally show
the behavior of $\zeta_{SU(3)}(s)$
at each integer.

\begin{Prop}
\label{prop:Witten zeta values}
Let $a$ be a positive integer.

$(1)$
\cite[Theorem 3]{Huard-Williams-Zhang96}
\[
\zeta_{SU(3)}(a)
=\frac{2^{a+2}}{1+(-1)^a 2}
\sum_{k=0}^{[a/2]}
\binom{2a-2k-1}{a-1}
\zeta(2k)\zeta(3a-k).
\]

$(2)$
$\zeta_{SU(3)}(0)=1/3$
and $\zeta_{SU(3)}'(0)=\log (2^{4/3}\pi)$.

$(3)$
If $a$ is odd, then
$\zeta_{SU(3)}(s)$ has a simple zero at $s=-a$,
and
\begin{equation}
\label{prop-eq:SU(3)-zeta at odd}
\zeta_{SU(3)}'(-a)=
2^{-a+2}\sum_{k=0}^{(a-1)/2}
\binom{a}{2k}
\zeta(-a-2k)\zeta'(-2a+2k)
+\frac{2^{-a+1}(a!)^2}{(2a+1)!}\zeta'(-3a-1).
\end{equation}
In particular, $\sign(\zeta_{SU(3)}'(-a))=(-1)^{(a-1)/2}$.

$(4)$
If $a$ is even, then
$\zeta_{SU(3)}(s)$ has a zero of order two at $s=-a$,
and
\begin{equation}
\label{prop-eq:SU(3)-zeta at even}
\zeta_{SU(3)}''(-a)
=2^{-a+2}\sum_{k=0}^{a/2}
\binom{a}{2k}
\zeta'(-a-2k)\zeta'(-2a+2k).
\end{equation}
In particular, $\sign(\zeta_{SU(3)}''(-a))=(-1)^{a/2}$.
\end{Prop}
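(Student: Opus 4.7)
The starting point is the master identity $\zeta_{SU(3)}(s)=\frac{2^{s+1}}{1+2\cos(\pi s)}A(s,s;s)$, which is immediate from Proposition \ref{prop:representation of zetas}(3), together with the finite residue expansion of $A$ in (\ref{eq:A-function shifted}). Part (1) is a direct specialisation: for a positive integer $a$, Lemma \ref{lem:evaluation of A}(1) with $t=u=a$ gives $A(a,a;a)=2\sum_{k=0}^{[a/2]}\binom{2a-2k-1}{a-2k}\zeta(2k)\zeta(3a-2k)$, and substituting this into the master identity, using $1+2\cos(\pi a)=1+2(-1)^a$ and the symmetry $\binom{2a-2k-1}{a-2k}=\binom{2a-2k-1}{a-1}$, yields the stated formula.

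For parts (2)--(4) I set $\epsilon=s+a$ and expand $A(s,s;s)$ at $s=-a$ via (\ref{eq:A-function shifted}) with $K\ge 2a+1$; the $k$-th residue term $2\cos^2(\pi(s-k)/2)\frac{(s)_k}{k!}\zeta(s-k)\zeta(2s+k)$ may vanish at $s=-a$ through four distinct mechanisms---namely, $(s)_k$ has a simple zero when $k\ge a+1$; $\cos^2(\pi(s-k)/2)=O(\epsilon^2)$ when $a+k$ is odd; $\zeta(s-k)$ has a trivial zero when $-a-k$ is a negative even integer $\le -2$; and $\zeta(2s+k)$ has a trivial zero when $k-2a$ is a negative even integer $\le -2$---while the only non-vanishing irregular behaviour occurs at $k=2a+1$, where the zero of $(s)_{2a+1}$ cancels the pole of $\zeta(1+2\epsilon)$ to give the finite limit $(-1)^a(a!)^2/2$. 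The integral remainder in (\ref{eq:A-function shifted}) carries the prefactor $\sin(\pi s)/\Gamma(s)=\pi a!\epsilon^2+O(\epsilon^3)$, hence contributes only at second order and beyond. Part (2) follows at once: at $s=0$ the sole surviving term is $k=0$ and gives $A(0,0;0)=2\zeta(0)^2=1/2$, so $\zeta_{SU(3)}(0)=1/3$; differentiating the same term once yields $-3\zeta'(0)=\tfrac{3}{2}\log(2\pi)$, which combined with the derivative of the prefactor $2^{s+1}/(1+2\cos(\pi s))$ at $s=0$ gives $\zeta_{SU(3)}'(0)=\log(2^{4/3}\pi)$. Part (3), for $a\ge 1$ odd, picks up first-order contributions only from those $k$ odd with $1\le k\le a$ and from $k=2a+1$; assembling them with the prefactor value $-2^{-a+1}$ at $s=-a$ and reindexing $k\mapsto a-k$ produces the stated formula for $\zeta_{SU(3)}'(-a)$.

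Part (4), for $a\ge 2$ even, is the most delicate case: $A(s,s;s)=O(\epsilon^2)$, and several groups of indices---$k$ even with $0\le k\le a$, $k$ odd with $1\le k\le a-1$, and the boundary values $k=2a,2a+1$---as well as the integral remainder all contribute at this order, yet the final answer involves only the diagonal sum $\sum_{k=0}^{a/2}\binom{a}{2k}\zeta'(-a-2k)\zeta'(-2a+2k)$. Establishing the necessary cancellations will be the main technical obstacle; the expectation is that the integral remainder produces exactly a negative copy of the diagonal sum (reducing the naive coefficient $4$ coming from the even residues to the required $3$, which matches the factor $(1+2\cos(\pi s))|_{s=-a}=3$ in the denominator of the master identity), while the residues at odd $k$ and at $k=2a,2a+1$ cancel among themselves after invoking Riemann's functional equation to express $\zeta(-2n-1)$ and $\zeta'(-2n)$ in terms of $\zeta(2n+1)$ and $\zeta(2n+2)$. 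Finally, the sign assertions in (3) and (4) follow by substituting $\zeta'(-2n)=(-1)^n(2n)!\zeta(2n+1)/(2(2\pi)^{2n})$ and $\zeta(-2n-1)=-B_{2n+2}/(2n+2)$ (with $\sign(B_{2n+2})=(-1)^n$) and verifying that every surviving summand carries the common sign $(-1)^{(a-1)/2}$ in (3), respectively $(-1)^{a/2}$ in (4).
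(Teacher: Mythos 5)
Your treatment of parts (1)--(3) follows the paper's route and is essentially complete: part (1) is Lemma \ref{lem:evaluation of A}(1) with $t=u=a$ inserted into (\ref{prop-eq:integral representation of Witten zeta}), and for (2)--(3) your order-of-vanishing bookkeeping for the residue terms in (\ref{eq:A-function shifted}) with $K=2a+1$ (the four vanishing mechanisms, the finite limit $(-1)^a(a!)^2/2$ of $(s)_{2a+1}\zeta(2s+2a+1)$ at $k=2a+1$, and the double zero of $\sin(\pi s)/\Gamma(s)$ killing the integral remainder to first order) is exactly what the paper does, as is the sign analysis via the functional equation.

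Part (4), however, is not proved: you state an ``expectation'' about how the second-order contributions cancel, and the expectation does not survive scrutiny. With the naive expansion at $K=2a+1$, the integral remainder contributes $2\pi a!\,R_K(-a)$ to the second derivative, where $R_K(-a)$ is a genuine Mellin--Barnes integral along $\Rep(\eta)=2a+3/2$; it is not ``a negative copy of the diagonal sum,'' and there is no way to evaluate it without further contour work. The paper avoids this by using a different decomposition from the outset: it moves the contour to a line through $\Rep(\eta)=a/2$ (indented at $\eta=a/2$), so that $A(s,s;s)$ near $s=-a$ is written as $-\sum_{k\le a/2}U_k+\sum_{l\le a/2-1}V_l+W+I$, where the $V_l$ are residues at the poles $\eta=-s-l$ of $\Gamma(s+\eta)$, $W$ is the residue at the pole $\eta=1-2s$ of $\zeta(2s+\eta)$, and the remaining integral $I''(-a)$ collapses, by an antisymmetry of the integrand under $\eta\mapsto a-\eta$ which makes the two vertical pieces cancel, to \emph{half} the residue at $\eta=a/2$. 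Even after this, the terms of the form $\pi^2\binom{a}{k}\zeta(-a-k)\zeta(-2a+k)$ (coming from odd indices and from $W$) do not cancel ``after invoking Riemann's functional equation'': their cancellation requires the convolution identity (\ref{lem-eq:convolution formula}) with $a=b=c$, i.e.\ the Chu--Wang theorem, which is the key arithmetic input you are missing. Likewise your proposed coefficient count ($4$ reduced to $3$ by the remainder) does not match the actual bookkeeping, in which the even-index $U$ and $V$ residues contribute with weights $8$ and $4$ respectively and combine, via the symmetry $j\leftrightarrow a-j$ of $\binom{a}{j}\zeta'(-a-j)\zeta'(-2a+j)$ together with the half-residue term, to the coefficient $6=2\cdot 3$ in front of the diagonal sum in (\ref{prop-eq:SU(3)-zeta at even}). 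Without the modified contour, the half-residue evaluation of $I''(-a)$, and the convolution formula, part (4) remains open in your argument.
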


\begin{Rem}
The value of Witten's zeta function $\zeta_G(s)$
of each finite group $G$
at $s=-2$ coincides with the order of $G$.
In this viewpoint,
it is attractive to clarify the behavior
of $\zeta_G(s)$ at $s=-2$
in the case of $G$ being an infinite compact topological group.
In \cite{Kurokawa-Ochiai},
Kurokawa and Ochiai
studied the values of
Witten's zeta functions
at negative integers,
and proved
that
$\zeta_{SU(3)}(s)$ has a zero at each negative integer.
Proposition \ref{prop:Witten zeta values}
can be regarded as
a refinement of their result.
Moreover, as seen below,
our proof
reveals
that
a zero of $\zeta_{SU(3)}(s)$ at each negative integer
comes from 
the gamma factors appearing
in the left sides of
(\ref{lem-eq1:GPFD1}) and (\ref{lem-eq1:GPFD2}).
\end{Rem}

\begin{proof}[Proof of Proposition $\ref{prop:Witten zeta values}$]
We use here the integral representation
(\ref{prop-eq:integral representation of Witten zeta})
of $\zeta_{SU(3)}(s)$.

(1) The result is clear from (\ref{lem-eq:A(a,t;u)}).

(2)
By (\ref{eq:A-function shifted}),
we see that,
if $K$ is a non-negative integer
and $-K/2+1/4<\Rep(s)<K+1/2$, then
\begin{align*}
\lefteqn{2^{-s-1}(1+2\cos(\pi s))\zeta_{SU(3)}(s)}
\quad &
\nonumber\\
&=2\sum_{k=0}^{K}
\frac{(s)_k}{k!}\cos^2\left(\frac{\pi (s-k)}2\right)
\zeta(s-k)\zeta(2s+k)
+\frac{\sin(\pi s)}{\Gamma(s)}
R_K(s),
\label{pf-eq:Witten zeta}
\end{align*}
where $R_K(s)$ is a holomorphic function.
We note that every term except for the term with $k=0$
has a zero of order at least two
at $s=0$. Hence, the values at $s=0$ can be immediately calculated.

(3) Set $K=2a+1$.
If $a$ is odd, then
the terms with $k=0,1,\dotsc,a$ satisfying $k\equiv a \pmod 2$
and the term with $k=2a+1$ have a simple zero at $s=-a$,
and the others have a zero of order at least two.
Hence, we can easily obtain the first part of the result.
The last part follows from
the functional equation of the Riemann zeta function.
Indeed,
we can show that
the sign of each term
on the right side of (\ref{prop-eq:SU(3)-zeta at odd})
coincides with $(-1)^{(a-1)/2}$.

(4)
Put $K=2a+1$ again.
In the same way, we see that
$\zeta_{SU(3)}(s)$ has a zero of order at least two at
$s=-a$ if $a$ is even.
In order to determine the multiplicity of the zero,
we now show (\ref{prop-eq:SU(3)-zeta at even}).
Assume that $0<\varepsilon<1/2$.
Set
\[
f(s,\eta)
=\frac{\sin(\pi s)}{\Gamma(s)}
\cot\left(\frac{\pi (s-\eta)}2\right)
\Gamma(s+\eta)\Gamma(-\eta)
\zeta(s-\eta)\zeta(2s+\eta).
\]
Then, by shifting the contour,
we obtain the following expression
of $\zeta_{SU(3)}(s)$ which is valid around $s=-a$:
\[
2^{-s-1}(1+2\cos(\pi s))\zeta_{SU(3)}(s)
=
-\sum_{k=0}^{a/2} U_k(s)
+\sum_{l=0}^{a/2-1} V_l(s)
+W(s)
+I(s),
\]
where
$
U_k(s)
=
\Res_{\eta=k} f(s,\eta)$,
$V_l(s)
=
\Res_{\eta=-s-l} f(s,\eta)$,
$W(s)
=\Res_{\eta=1-2s}f(s,\eta)$
and
\[
I(s)=
\frac1{2\pi i}\int_{C_\varepsilon} f(s,\eta)d\eta
\]
whose
contour $C_\varepsilon$ describes the union of
$C_\varepsilon^{(1)}:a/2-i\infty \rightarrow a/2-i\varepsilon$,
$C_\varepsilon^{(2)}:a/2+\varepsilon e^{i\theta}
(\theta:-\pi/2 \rightarrow \pi/2)$
and
$C_\varepsilon^{(3)}:a/2+i\varepsilon \rightarrow a/2+i\infty$.
We here remark that
the poles at $\eta=k,s-2m,-s-a/2-m~(k=0,1,\dotsc,a/2;m=0,1,2,\dotsc)$
lie on the left of the contour $C_\varepsilon$
and the poles at $\eta=1-2s,-s-l,a/2+n
~(l=0,1,\dotsc,a/2-1;n=1,2,\dotsc)$
lie on the right.
Hence,
\[
2^{a-1} 3\,\zeta_{SU(3)}''(-a)
=
-\sum_{k=0}^{a/2} U_k''(-a)
+\sum_{l=0}^{a/2-1} V_l''(-a)
+W''(-a)
+I''(-a).
\]
By simple calculation, we first see that,
for $k=0,1,\dotsc,a/2$
and
$l=0,1,\dotsc,a/2-1$,
\begin{align*}
U_k''(-a)
&=
\begin{cases}
\displaystyle
-8 \binom{a}{k}
\zeta'(-a-k)\zeta'(-2a+k)
& \text{if $k$ is even},\\
\displaystyle
\pi^2 \binom{a}{k}
\zeta(-a-k)\zeta(-2a+k)
& \text{if $k$ is odd},
\end{cases}
\\
V_l''(-a)
&=
\begin{cases}
\displaystyle
4 \binom{a}{l}
\zeta'(-a-l)\zeta'(-2a+l)
& \text{if $l$ is even},\\
\displaystyle
-2\pi^2 \binom{a}{l}
\zeta(-a-l)\zeta(-2a+l)
& \text{if $l$ is odd},
\end{cases}
\end{align*}
and
\[
W''(-a)=\frac{3\pi^2}{2}
\frac{(a!)^2}{(2a+1)!}\zeta(-3a-1).
\]
We next evaluate
\[
I''(-a)
=\frac1{2\pi i}\int_{C_{\varepsilon}}f''(-a,\eta)d\eta,
\]
where $f''$ means $(\partial/\partial s)^2 f$.
Since
the integrals on $C_\varepsilon^{(1)}$ and $C_\varepsilon^{(3)}$
cancel each other out,
we obtain
\begin{align*}
I''(-a)
&=\frac1{2\pi i}\int_{C_\varepsilon^{(2)}}
\Res_{\eta=a/2}f''(-a,\eta)
\frac{d\eta}{\eta-a/2}\\
&\quad
+\frac1{2\pi i}\int_{C_\varepsilon^{(2)}}
\left(f''(-a,\eta)-
\Res_{\eta=a/2}f''(-a,\eta)\cdot
\frac1{\eta-a/2}
\right)d\eta.
\end{align*}
We note that the integrand in the second integral
is holomorphic at $\eta=a/2$,
and so the second integral tends to zero
as $\varepsilon$ tends to zero.
Since $I(s)$ is independent of the choice of $\varepsilon$,
we get
\[
I''(-a)=\frac12 \Res_{\eta=a/2} f''(-a,\eta)
=D_1(a)+D_2(a),
\]
where
\begin{align*}
D_1(a)
&=
\begin{cases}
\displaystyle
-\frac{\pi^2}2\binom{a}{a/2}\zeta(-3a/2)^2
& \text{if $a\equiv 2 \pmod 4$},\\
0 & \text{if $a\equiv 0 \pmod 4$},
\end{cases}
\\
D_2(a)
&=
\begin{cases}
0 & \text{if $a\equiv 2 \pmod 4$},\\
\displaystyle
-2 \binom{a}{a/2}
\zeta'(-3a/2)^2
& \text{if $a\equiv 0 \pmod 4$}.
\end{cases}
\end{align*}
Combining the above results,
we have
\begin{align*}
-
\lefteqn{\sum_{\substack{0\le k\le a/2\\ k:\text{odd}}}
U_k''(-a)
+\sum_{\substack{0\le l\le a/2-1\\ l:\text{odd}}}
V_l''(-a)
+W''(-a)+D_1(a)}
\quad &\\
&=
-\frac{3\pi^2}{2}
\sum_{k=0}^a \binom{a}{k}
\zeta(-a-k)\zeta(-2a+k)
+\frac{3\pi^2}{2}
\frac{(a!)^2}{(2a+1)!}\zeta(-3a-1)\\
&=0,
\end{align*}
where in the last step we have used
(\ref{lem-eq:convolution formula})
with $a=b=c$.
Moreover, we see
\begin{align*}
-\lefteqn{\sum_{\substack{0\le k\le a/2\\ k:\text{even}}}
U_k''(-a)
+\sum_{\substack{0\le l\le a/2-1\\ l:\text{even}}}
V_l''(-a)
+D_2(a)}
\quad &\\
&=
6\sum_{k=0}^{a/2}
\binom{a}{2k}
\zeta'(-a-2k)\zeta'(-2a+2k).
\end{align*}
Thus, we obtain (\ref{prop-eq:SU(3)-zeta at even}).
In the same way as (3) above,
we get
$\sign(\zeta_{SU(3)}''(-a))=(-1)^{a/2}$,
which completes the proof of Proposition
\ref{prop:Witten zeta values}.
\end{proof}


\appendix
\section{A functional equation for Euler's double zeta function}
The $A$-function
(\ref{thm-eq:definition of A})
has not been found in previous papers
on multiple zeta functions.
However,
as seen in the next proposition,
$A(s,t;0)$
is related to
the functional equation
of $\zeta(s,t)=\zeta(t,0;s)$
which was obtained by Matsumoto
\cite[Theorem 1]{Matsumoto04}.

\begin{Prop}
\label{prop:functional equation}
Set
\[
h(s,t)=\zeta(s,t)-\frac{\Gamma(1-t)}{\Gamma(s)}
\Gamma(s+t-1)\zeta(s+t-1).
\]
Then, we have
\begin{align}
\frac{h(s,t)}{(2\pi)^{s+t-1}\Gamma(1-t)}
&=\cos\left(\frac{\pi}2(s+t-1)\right)
\frac{h(1-t,1-s)}{\Gamma(s)}
\nonumber\\
&\quad
+\sin\left(\frac{\pi}2(s+t-1)\right)
\frac{\Gamma(1-s)}{\pi}A(1-s,1-t;0).
\label{prop-eq:functional equation}
\end{align}
In particular,
the second term on the right side of
{\upshape (\ref{prop-eq:functional equation})}
vanishes
on the hyperplane 
$s+t=2k+1$ $(k\in\Zb\setminus\{0\})$
$($cf.\ \cite[Theorem 2.2]{Komori-Matsumoto-Tsumura10}$)$.
\end{Prop}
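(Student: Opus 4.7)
Plan: I would prove the identity by combining Proposition \ref{prop:representation of zetas}(1) with the functional equation of the Riemann zeta function. Using $\zeta(s,t)=\zeta(t,0;s)$, Proposition \ref{prop:representation of zetas}(1) reads
\[
(\cos(\pi t)-\cos(\pi s))\zeta(s,t)=A(s,t;0)+A(t,s;0)-(1+\cos(\pi s))\zeta(s)\zeta(t)+\cos(\pi s)\zeta(s+t),
\]
and applying it with $(s,t)\mapsto(1-t,1-s)$, using $\cos(\pi(1-s))=-\cos(\pi s)$, yields a parallel expression for $\zeta(1-t,1-s)$ whose leading coefficient is again $\cos(\pi t)-\cos(\pi s)$. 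I would then form the combination
\[
\Gamma(s)h(s,t)-(2\pi)^{s+t-1}\Gamma(1-t)\cos\!\left(\frac{\pi(s+t-1)}{2}\right)h(1-t,1-s),
\]
substitute the two expressions, and convert every occurrence of $\zeta(1-s)$, $\zeta(1-t)$, $\zeta(2-s-t)$, $\zeta(1-s-t)$ by $\zeta(1-w)=2(2\pi)^{-w}\cos(\pi w/2)\Gamma(w)\zeta(w)$. With correct bookkeeping all elementary zeta-product contributions---including those arising from the $h$-regularisations $\Gamma(s+t-1)\zeta(s+t-1)$ and $\Gamma(1-s-t)\zeta(1-s-t)$---should cancel, leaving only a combination of the four values $A(s,t;0)$, $A(t,s;0)$, $A(1-s,1-t;0)$ and $A(1-t,1-s;0)$.

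To collapse this residual four-$A$ combination to the single predicted term $(2\pi)^{s+t-1}\Gamma(1-t)\Gamma(1-s)\pi^{-1}\sin(\pi(s+t-1)/2)\,A(1-s,1-t;0)$, I would work inside the Mellin--Barnes representation \eqref{thm-eq:definition of A}. Applying the Riemann zeta functional equation to both $\zeta(s-\eta)$ and $\zeta(t+\eta)$ in the integrand of $A(s,t;0)$, and using the reflection identities $\Gamma(s-\eta)\Gamma(1-s+\eta)=\pi/\sin(\pi(s-\eta))$ and $\Gamma(t+\eta)\Gamma(1-t-\eta)=\pi/\sin(\pi(t+\eta))$, converts the integrand into one of the shape appearing in $A(1-s,1-t;0)$ after the substitution $\eta\mapsto-\eta$; an analogous manipulation relates $A(t,s;0)$ to $A(1-t,1-s;0)$. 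A contour shift of the type performed in the proof of Theorem, combined with Barnes' lemma, should reconcile these integrals with the instance of $Z=A+A$ at arguments $(1-s,1-t;0)$ so that everything other than the claimed multiple of $A(1-s,1-t;0)$ cancels.

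The main obstacle I anticipate is precisely this residue bookkeeping: each contour deformation contributes several pole residues (from the $\Gamma$-factors and from $\cot(\pi(s-\eta)/2)$) that are themselves products of Riemann zeta values, and these must exactly match the elementary terms already accumulated from the two invocations of Proposition \ref{prop:representation of zetas}(1). Finally, the second term of \eqref{prop-eq:functional equation} vanishes on the hyperplane $s+t=2k+1$ with $k\in\Zb\setminus\{0\}$ because $\sin(\pi k)=0$; the case $k=0$ is excluded since $h(s,t)$ itself is singular along $s+t=1$ owing to the factor $\Gamma(s+t-1)\zeta(s+t-1)$.
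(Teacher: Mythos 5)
Your plan correctly identifies the engine of the proof (the Riemann functional equation applied inside a Mellin--Barnes integrand) and the trivial final claim (the factor $\sin(\tfrac{\pi}{2}(s+t-1))=\sin(\pi k)$ vanishes on $s+t=2k+1$, with $k=0$ excluded because of the singularity along $s+t=1$, and with the caveat--supplied by the Remark--that $A(1-s,1-t;0)$ has no compensating poles on those hyperplanes). But the core of the proposition, namely the exact decomposition into $\cos(\tfrac{\pi}{2}(s+t-1))\,h(1-t,1-s)$ plus $\sin(\tfrac{\pi}{2}(s+t-1))\,\tfrac{\Gamma(1-s)}{\pi}A(1-s,1-t;0)$, is not established: the step where the four-$A$ combination plus the accumulated elementary terms is supposed to collapse to the single claimed term is only asserted to ``reconcile.'' That step is the entire content of the proposition, and your route makes it harder than necessary. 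Two concrete problems: (i) your first-paragraph claim that after substituting Proposition \ref{prop:representation of zetas}(1) twice ``all elementary zeta-product contributions should cancel'' is false as stated -- e.g.\ the $\zeta(s)\zeta(t)$ terms leave a residue proportional to $\sin(\pi s)\sin(\pi t)-(1+\cos\pi s)(1+\cos\pi t)$, which must instead be absorbed by the later contour manipulations; (ii) applying the functional equation to both zeta factors in the integrand of $A(s,t;0)$ produces, via the product-to-sum identity $2\cos(\tfrac{\pi(s-\eta)}{2})\sin(\tfrac{\pi(t+\eta)}{2})=\sin(\tfrac{\pi(s+t)}{2})+\sin(\tfrac{\pi(t-s+2\eta)}{2})$, a \emph{mixture} of an $h$-type integrand and an $A$-type integrand, not ``one of the shape appearing in $A(1-s,1-t;0)$'' alone, so each of your four $A$-values feeds back into both kinds of terms and the bookkeeping you defer is genuinely intricate.

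The paper avoids all of this by working with $h$ directly: it starts from the classical representation
\[
\zeta(s,t)=\frac1{2\pi i}\int_{(c)}\frac{\Gamma(s+\eta)\Gamma(-\eta)}{\Gamma(s)}\zeta(t-\eta)\zeta(s+\eta)\,d\eta ,
\]
shifts the contour past the pole at $\eta=t-1$ (whose residue is exactly the regularising term in the definition of $h$), and then applies the functional equation to the product $\zeta(t-\eta)\zeta(s+\eta)$ in the single remaining integrand. The product-to-sum splitting then exhibits the integrand as $(2\pi)^{s+t-1}\Gamma(1-t)$ times the sum of the integrand of $\cos(\tfrac{\pi}{2}(s+t-1))\,h(1-t,1-s)/\Gamma(s)$ and that of $\sin(\tfrac{\pi}{2}(s+t-1))\,\tfrac{\Gamma(1-s)}{\pi}A(1-s,1-t;0)$, with matching contours -- one integral identity, no division by $\cos(\pi t)-\cos(\pi s)$, and no four-$A$ reduction. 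I would either carry out your deferred computation in full or switch to this direct argument.
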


\begin{Rem}
Firstly, the function $g(u,v)$ in Matsumoto's paper
coincides with $h(v,u)$.
Secondly,
it may seem that
the singularities of $\zeta(s,t)$ are located on
the hyperplanes $s=1-l$ and $s+t=2-l~(l=0,1,2,\dotsc)$.
However,
the singularities on
$s=-l$ and $s+t=-1-2l~(l=0,1,2,\dotsc)$
are fake,
namely, the singularities of $\zeta(s,t)$
are only located on the hyperplanes
$s=1$, $s+t=1$ and $s+t=2-2l~(l=0,1,2,\dotsc)$.
This can be confirmed,
for instance, 
by
(\ref{eq:A-function shifted})
and
(\ref{prop-eq:representation of Euler double zeta}).
Hence, the last part of the proposition is justified.
\end{Rem}

\begin{proof}[Proof of Proposition {\upshape \ref{prop:functional equation}}]
We first recall the usual integral representation
of $\zeta(s,t)$
(cf.\ \cite[(5.2)]{Matsumoto02}):
\[
\zeta(s,t)=
\frac1{2\pi i}\int_{(c)}
\frac{\Gamma(s+\eta)\Gamma(-\eta)}{\Gamma(s)}
\zeta(t-\eta)\zeta(s+\eta)d\eta
\]
for $s,t\in\Cb$ with $\Rep(s)>1$ and $\Rep(t)>1$,
where
$-\Rep(s)+1<c<0$
and
the contour $(c)$
describes the line from $c-i\infty$ to $c+i\infty$.
Since the residue of the integrand at $\eta=t-1$
is
\[
-\frac{\Gamma(1-t)}{\Gamma(s)}\Gamma(s+t-1)\zeta(s+t-1)
\]
unless $t=1,2,3,\dotsc$,
we shift the contour to obtain
\[
h(s,t)=
\frac1{2\pi i}\int_{C}
\frac{\Gamma(s+\eta)\Gamma(-\eta)}{\Gamma(s)}
\zeta(t-\eta)\zeta(s+\eta)d\eta
\]
for $s,t\in\Cb$ with
$s\ne1-k$ and $t\ne k$ ($k=0,1,2,\dotsc$),
where the contour $C$
is a line from $-i\infty$ to $i\infty$
indented in such a manner as to separate
the points at $\eta=-s+1-l, t-l~(l=0,1,2,\dotsc)$
from the points at $\eta=0,1,2,\dotsc$.
By the functional equation of the Riemann zeta function,
the integrand is equal to
$(2\pi)^{s+t-1}\Gamma(1-t)$ times
\begin{align*}
&
\cos\left(\frac{\pi}2(s+t-1)\right)
\frac{\Gamma(1-t+\eta)\Gamma(-\eta)}{\Gamma(s)\Gamma(1-t)}
\zeta(1-s-\eta)\zeta(1-t+\eta)
\\
&
+\sin\left(\frac{\pi}2(s+t-1)\right)
\frac{\Gamma(1-s)}{\pi}
\sin(\pi (1-s))\\
&\quad
\times
\cot\left(\frac{\pi}2(1-s-\eta)\right)
\frac{\Gamma(1-t+\eta)\Gamma(-\eta)}{\Gamma(1-t)}
\zeta(1-s-\eta)\zeta(1-t+\eta).
\end{align*}
Thus, we obtain (\ref{prop-eq:functional equation}).
\end{proof}

We now compare our result with the result of Matsumoto
to obtain a new representation
of $A(s,t;0)$.
For $(s,t)\in\Cb^2$ with $\Rep(s)<0$ and $\Rep(t)>1$,
set
\[
F_{\pm}(s,t)
=\sum_{k=1}^\infty \sigma_{s+t-1}(k)\Psi(t,s+t;\pm 2\pi i k),
\]
where
$\sigma_\nu(k)=\sum_{d|k}d^{\nu}$
and
$\Psi(\alpha,\gamma;z)$
is the confluent hypergeometric function
of the second kind.
It is known that
$F_{\pm}(s,t)$ can be continued meromorphically
to the whole space $\Cb^2$.

\begin{Cor}
The function
$A(s,t;0)$ can be represented in terms of the $F_{\pm}$-functions:
\[
2\Gamma(s)A(s,t;0)
=(2\pi i)^{s+t} F_+(s,t)+(-2\pi i)^{s+t}F_-(s,t).
\]
\end{Cor}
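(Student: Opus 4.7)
The plan is to derive this identity by juxtaposing our Proposition \ref{prop:functional equation} with the original functional equation for the Euler double zeta function due to Matsumoto \cite[Theorem 1]{Matsumoto04}, which already expresses $h(s,t)$ in terms of $h(1-t,1-s)$ and the confluent-hypergeometric series $F_{\pm}$. Since the same function $h(s,t)$ admits two explicit functional equations with a common ``classical'' term $\cos(\pi(s+t-1)/2)\,h(1-t,1-s)/\Gamma(s)$, the remaining pieces — one proportional to $A(1-s,1-t;0)$, the other a linear combination of $F_{\pm}$ — must coincide. After a change of variables this yields the asserted representation of $A(s,t;0)$.

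Concretely, the first step is to translate Matsumoto's theorem into the notation of Proposition \ref{prop:functional equation}, taking care of his index convention $g(u,v)=h(v,u)$ noted in the remark preceding the corollary. This produces an expansion of the shape
$$\frac{h(s,t)}{(2\pi)^{s+t-1}\Gamma(1-t)}
=\cos\!\left(\frac{\pi(s+t-1)}{2}\right)\frac{h(1-t,1-s)}{\Gamma(s)}
+\frac{1}{\Gamma(s)}\bigl[(2\pi i)^{s+t-1}F_+(1-s,1-t)+(-2\pi i)^{s+t-1}F_-(1-s,1-t)\bigr].$$
Subtracting Proposition \ref{prop:functional equation} cancels both $h$-terms and leaves the identity
$$\sin\!\left(\frac{\pi(s+t-1)}{2}\right)\frac{\Gamma(1-s)}{\pi}A(1-s,1-t;0)
=\frac{1}{\Gamma(s)}\bigl[(2\pi i)^{s+t-1}F_+(1-s,1-t)+(-2\pi i)^{s+t-1}F_-(1-s,1-t)\bigr].$$

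The second step is to clear these prefactors using $\Gamma(s)\Gamma(1-s)=\pi/\sin(\pi s)$ together with the phase identity $(\pm i)^{s+t-1}=e^{\pm i\pi(s+t-1)/2}$. The factor $\sin(\pi(s+t-1)/2)$ on the left can be absorbed into the imaginary powers on the right, turning $(\pm 2\pi i)^{s+t-1}$ into $(\pm 2\pi i)^{s+t}$ (up to a sign that matches the two-termed combination). An elementary rearrangement then yields
$$2\Gamma(1-s)A(1-s,1-t;0)=(2\pi i)^{2-s-t}F_+(1-s,1-t)+(-2\pi i)^{2-s-t}F_-(1-s,1-t),$$
and the substitution $(s,t)\mapsto(1-s,1-t)$ delivers the statement of the corollary.

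The main obstacle will be the bookkeeping of the phase and gamma constants: one must carefully translate Matsumoto's conventions (including the swap $g(u,v)=h(v,u)$ and his placement of the $(2\pi)^{s+t-1}$ and $\Gamma(1-t)$ factors) into the notation used here, and then verify that the trigonometric factor $\sin(\pi(s+t-1)/2)$ combines with the ratio $\Gamma(1-s)/\Gamma(s)$ exactly so as to promote $(\pm 2\pi i)^{s+t-1}$ into $(\pm 2\pi i)^{s+t}$. Once this calculation is carried out, the corollary follows immediately from the uniqueness of the secondary term in a functional equation of the form of Proposition \ref{prop:functional equation}.
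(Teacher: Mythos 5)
Your strategy---matching the secondary term of Proposition \ref{prop:functional equation} against the $F_\pm$-part of Matsumoto's functional equation and then applying the transformation law of $F_\pm$---is exactly the route the paper takes, and it does close. However, the intermediate identity you transcribe for Matsumoto's Theorem 1 cannot be right as written: if the secondary term were $\Gamma(s)^{-1}\bigl[(2\pi i)^{s+t-1}F_+(1-s,1-t)+(-2\pi i)^{s+t-1}F_-(1-s,1-t)\bigr]$, then substituting $F_\pm(1-s,1-t)=(\pm 2\pi i)^{s+t-1}F_\pm(t,s)$ would produce doubled phases $(\pm 2\pi i)^{2(s+t-1)}$ together with a stray $\Gamma(s)^{-1}$, which is incompatible with the corollary you are aiming at; so the constant-chasing you defer is not merely routine bookkeeping but the place where your sketch currently breaks. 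The paper sidesteps this by working from Matsumoto's Propositions 1 and 2 rather than his Theorem 1: Proposition 1 gives $h(s,t)/[(2\pi)^{s+t-1}\Gamma(1-t)]=e^{\pi i(s+t-1)/2}F_+(t,s)+e^{-\pi i(s+t-1)/2}F_-(t,s)$, whose real/imaginary split is $\cos(\tfrac{\pi}2(s+t-1))(F_++F_-)+i\sin(\tfrac{\pi}2(s+t-1))(F_+-F_-)$; Proposition 2 identifies $F_+(t,s)+F_-(t,s)$ with $h(1-t,1-s)/\Gamma(s)$, so comparison with (\ref{prop-eq:functional equation}) yields the clean identity $\Gamma(1-s)A(1-s,1-t;0)=\pi i\,(F_+(t,s)-F_-(t,s))$. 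From there a single application of $F_\pm(t,s)=(\pm 2\pi i)^{1-s-t}F_\pm(1-s,1-t)$, absorbing $\pi i=\tfrac12(2\pi i)$ into the powers, and the relabeling $(s,t)\mapsto(1-s,1-t)$ give the stated formula. If you redo your computation starting from these two propositions instead of the assembled Theorem 1, your argument becomes the paper's proof.
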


\begin{proof}
Propositions 1 and 2 in \cite{Matsumoto04}
show
\[
\frac{h(s,t)}{(2\pi)^{s+t-1}\Gamma(1-t)}
=e^{\pi i(s+t-1)/2}F_+(t,s)+e^{\pi i(1-s-t)/2}F_-(t,s)
\]
and
\begin{equation}
\label{pf-eq:functional equation of F}
F_{\pm}(1-t,1-s)=(\pm 2\pi i)^{s+t-1}F_{\pm}(s,t),
\end{equation}
respectively.
These suggest
\[
\frac{h(1-t,1-s)}{\Gamma(s)}
=F_+(t,s)+F_-(t,s)
\]
and so we see
\begin{align*}
\frac{h(s,t)}{(2\pi)^{s+t-1}\Gamma(1-t)}
&=\cos\left(\frac{\pi}2(s+t-1)\right)
\frac{h(1-t,1-s)}{\Gamma(s)}\\
&\quad
+i\sin\left(\frac{\pi}2(s+t-1)\right)
(F_+(t,s)-F_-(t,s)).
\end{align*}
By comparing this with (\ref{prop-eq:functional equation}),
we have
$\Gamma(1-s)A(1-s,1-t;0)=\pi i(F_+(t,s)-F_-(t,s))$.
Thus, we use (\ref{pf-eq:functional equation of F})
to obtain the result.
\end{proof}



\begin{thebibliography}{10}
\bibitem{Chu-Wang10}
W. Chu and C. Wang,
Convolution formulae for Bernoulli numbers,
Integral Transforms Spec.\ Funct.\
\textbf{21} (2010), 437--457.
%
\bibitem{Huard-Williams-Zhang96}
J. G. Huard, K. S. Williams and N.-Y. Zhang,
On Tornheim's double series,
Acta Arith.\ \textbf{75} (1996), 105--117.
%
\bibitem{Komori08}
Y. Komori,
An integral representation of the Mordell-Tornheim double zeta function
and its values at non-positive integers,
Ramanujan J. \textbf{17} (2008), 163--183.
%
\bibitem{Komori-Matsumoto-Tsumura10}
Y. Komori, K. Matsumoto and H. Tsumura,
Functional equations and functional relations for
the Euler double zeta-function
and its generalization of Eisenstein type,
Publ.\ Math.\ Debrecen \textbf{77} (2010), 15--31.
%
\bibitem{Kurokawa-Ochiai}
N. Kurokawa and H. Ochiai,
Zeros of Witten zeta functions and applications,
preprint 2011.
%
\bibitem{Lie_arxiv}
Z. Li,
On functional relations for the alternating analogues
of Tornheim's double zeta function,
arXiv:1011.2897v1.
%
\bibitem{Matsumoto02}
K. Matsumoto,
On the analytic continuation of various multiple zeta-functions,
in: Number Theory for the Millennium II,
Proc.\ Millennial Conference on Number Theory,
Urbana-Champaign, 2000,
(eds.\ M. A. Bennett et al.),
A. K. Peters, 2002, 417--440.
%
%
\bibitem{Matsumoto04}
K. Matsumoto,
Functional equations for double zeta-functions,
Math.\ Proc.\ Camb.\ Phil.\ Soc.\ \textbf{136} (2004), 1--7.
%
\bibitem{Nakamura06}
T. Nakamura,
A functional relation for the Tornheim double zeta function,
Acta Arith.\ \textbf{125} (2006), 257--263.
%
%
\bibitem{Onodera}
K. Onodera,
Mordell--Tornheim multiple zeta values at non-positive integers,
to appear in Ramanujan J.
%
\bibitem{Sczech92}
R. Sczech,
Eisenstein cocycles for $GL_2\Qb$ and values of $L$-functions
in real quadratic fields,
Comment.\ Math.\ Helv.\ \textbf{67} (1992), 363--382.
%
\bibitem{Tornheim50}
L. Tornheim,
Harmonic double series,
Amer.\ J. Math.\ \textbf{72} (1950), 303--314.
%
\bibitem{Tsumura07}
H. Tsumura,
On functional relations between the Mordell-Tornheim double zeta functions
and the Riemann zeta function,
Math.\ Proc.\ Cambridge Philos.\ Soc.\ \textbf{142} (2007), 395--405.
%
\bibitem{Weil76}
A. Weil,
Elliptic functions
according to Eisenstein and Kronecker,
Springer-Verlag, Berlin, Heidelberg, New York, 1976.
%
\bibitem{Whittaker-Watson}
E. T. Whittaker and G. N. Watson,
A course of modern analysis,
4th ed.,
Cambridge Univ.\ Press, 1927.
\end{thebibliography}
\end{document}